\def\eqref#1{equation~\ref{#1}}
\def\1{\bm{1}}
\DeclareMathAlphabet{\mathsfit}{\encodingdefault}{\sfdefault}{m}{sl}
\SetMathAlphabet{\mathsfit}{bold}{\encodingdefault}{\sfdefault}{bx}{n}
\theoremstyle{plain}
\theoremstyle{definition}
\theoremstyle{remark}
\numberwithin{equation}{section}
\newtheorem{theorem}{Theorem}
\newtheorem{lemma}{Lemma}
\newtheorem{proposition}{Proposition}
\newtheorem{definition}{Definition}
\author[1]{Chenyang Wu}
\author[2,3]{Qian Chen}
\author[2,4,*]{Akang Wang}
\author[2]{Tian Ding}
\author[2,4]{Ruoyu Sun}
\author[1,*]{Wenguo Yang}
\author[2,5]{Qingjiang Shi}
\affil[1]{University of Chinese Academy of Sciences, China}
\affil[2]{Shenzhen Research Institute of Big Data, China}
\affil[3]{School of Science and Engineering, The Chinese University of Hong Kong, Shenzhen, China }
\affil[4]{School of Data Science, The Chinese University of Hong Kong, Shenzhen, China}
\affil[5]{School of Computer Science and Technology, Tongji University, Shanghai, China}
\begin{document}

\title{On Representing Convex Quadratically Constrained Quadratic Programs via Graph Neural Networks}
\maketitle

\footnotetext{*Corresponding authors: Wenguo Yang \textless yangwg@ucas.ac.cn\textgreater, Akang Wang \textless wangakang@sribd.cn\textgreater}

\begin{abstract}
Convex quadratically constrained quadratic programs (QCQPs) involve finding a solution within a convex feasible region defined by quadratic constraints while minimizing a convex quadratic objective function. These problems arise in various industrial applications, including power systems and signal processing. Traditional methods for solving convex QCQPs primarily rely on matrix factorization, which quickly becomes computationally prohibitive as the problem size increases. Recently, graph neural networks (GNNs) have gained attention for their potential in representing and solving various optimization problems such as linear programs and linearly constrained quadratic programs. In this work, we investigate the representation power of GNNs in the context of QCQP tasks. Specifically, we propose a new tripartite graph representation for general convex QCQPs and properly associate it with message-passing GNNs. We demonstrate that there exist GNNs capable of reliably representing key properties of convex QCQPs, including \textit{feasibility}, \textit{optimal value}, and \textit{optimal solution}. Our result deepens the understanding of the connection between QCQPs and GNNs, paving the way for future machine learning approaches to efficiently solve QCQPs.
\end{abstract}

\section{Introduction}
\label{sec:intro}

\textit{Quadratic programs}~(QPs) are a pivotal class of optimization problems where the objective function is quadratic, and the constraints are typically linear or quadratic.
Based on the nature of constraints, QPs can be further classified as \textit{linearly constrained quadratic programs}~(LCQPs) and \textit{quadratically constrained quadratic programs}~(QCQPs). 
When the objective and constraint matrices are positive semi-definite, the problem becomes a convex QCQP, making it both theoretically interesting and practically important. 
Convex QCQPs arise in various critical applications such as robust optimization in uncertain environments~\citep{ben2001lectures,boyd2004convex}, power flow~\citep{bienstock2020mathematical}, and signal processing~\citep{luo2010semidefinite}.

Solving QPs, especially those with quadratic constraints, presents significant challenges. 
Traditional methods often involve computationally intensive procedures that would struggle with scalability and real-time processing requirements. 
For example, the \textit{interior-point method}~\citep{nocedal1999numerical} for a general $n$-variable QP involves solving a sequence of linear systems of equations, necessitating matrix decomposition with a runtime complexity of~$\mathcal{O}(n^3)$. 
This leads to substantial computational burden in the large-scale case. 
Similarly, active-set algorithms~\citep{gill2019practical}, which work by iteratively adjusting the set of active constraints, can also become computationally demanding as the number of constraints and variables increase.

In recent years, advances in \textit{machine learning} (ML) have opened new avenues for enhancing the solving process of QPs. 
There are mainly two categories of ML-aided QP methods. 
The first category aims to learn adaptive configurations of a specific QP algorithm or solver to accelerate the solving process~\citep{bonami2018learning,ichnowski2021accelerating,jung2022learning}, while the second focuses on predicting an initial solution of QPs, which is either directly taken as a final solution or further refined by subsequent algorithms or QP solvers~\citep{bertsimas2022online,gao2021deep,sambharya2023end,tan2024ensemble,wang2020learning,xiongsolving}. Additionally, \cite{xiong2024neuralqp} proposes a hypergraph-based framework for solving QCQPs.
Most of these methods utilize \textit{graph neural networks}~(GNNs) to leverage the structural properties of graph-structured data, making them particularly well-suited for representing the relationships and dependencies inherent in QPs. 
By encoding QP instances into graphs, GNNs can capture intricate features and provide adaptive guidance or approximate solutions efficiently.

In addition to these studies, theoretical research on the expressive power of GNNs \citep{zhang2023expressive, li2022expressive} and their relation to optimization problems has further strengthened the understanding of their capabilities. 
For instance, \cite{chen2022LP} and \cite{chen2022MILP} established theoretical foundations for applying GNNs to solving \textit{linear programs}~(LPs) and \textit{mixed-integer linear programs}, respectively. 
Further, such foundations are extended to LCQPs and their discrete variant, mixed-integer LCQPs in \cite{chen2024expressive}.

Previous studies have empirically and theoretically demonstrated the utility of GNNs in speeding up existing QP solvers and directly approximating solutions for various QP instances. 
However, the question of \textit{whether GNNs can accurately predict key properties of QCQPs, such as feasibility, optimal objective value, and optimal solution}, remains open.
This paper aims to address the aforementioned gap by exploring both theoretical foundations and practical implementation of using GNNs for solving convex QCQPs. 
Specifically, we propose a tripartite graph representation for general convex QCQPs, and establish theoretical foundations of applying GNNs to optimize QCQPs. 
The distinct contributions of this paper can be summarized as follows.
\begin{itemize}
    
    \item \textbf{Graph Representation}. 
    We propose a novel tripartite graph representation for general QCQPs, which divides a QCQP into three types of nodes: linear-term, quadratic-term, and constraint nodes, with edges added between heterogeneous nodes to indicate problem parameters. 

    \item \textbf{Theoretical Foundation}. 
    We conduct analysis on the \textit{separation power} as well as \textit{approximation power} of \textit{message-passing GNNs}~(MP-GNNs). 
    We showed that MP-GNNs are capable of capturing some key properties of convex QCQPs.
    
    \item \textbf{Empirical Evidence}. 
    We conduct initial numerical tests of the tripartite MP-GNNs on small QCQP instances. The results showed that MP-GNNs can be trained to approximate the key properties well.
    
\end{itemize}

Building on the theoretical insights of our work, we establish a deeper understanding of the equivalence properties in convex QCQPs. By examining what key properties of QCQPs can be represented by GNNs, we gain a clearer view of QCQPs' intrinsic geometry and feasibility structure. 

Although our work is primarily theoretical, it opens up interesting opportunities for future empirical investigation. In particular, our insights can inform the design of practical GNN-based solvers that improve efficiency in tackling convex QCQPs. Moreover, while our main focus is on convex QCQPs, many solvers for more general, non-convex QCQPs rely on relaxation steps involving convex QCQPs at intermediate stages. Consequently, our insights could help accelerate or guide the solving of these relaxation-based subproblems, offering a pathway to enhanced performance even in non-convex scenarios.

\subsection*{Notations}
Throughout this paper, scalars or vectors are denoted by lowercase letters (e.g., $a$), and matrices are denoted by uppercase letters (e.g., $A$). For a vector $a$, we denote its $i$-th entry by $a_i$. For a matrix $A$, the entry in the $i$-th row and the $j$-th column is denoted by $a_{i,j}$. We use $\mathbf{0}$ and $\mathbf{1}$ to denote vectors or matrices with all-zero and all-one entries, respectively. For any positive integers $m,n$ with $m<n$, we define $[m,n] \coloneqq \{m, m+1,\cdots, n\}$ to be the set of all integers ranging from $m$ to $n$. For brevity, we define $[n] \coloneqq [1:n] = \{1, 2,\cdots, n\}$.

\section{Graph Representation of QCQPs} \label{sec:graph_representation}

\subsection{Quadratically Constrained Quadratic Programs} \label{sec:QCQP}

In this work, we study QCQPs defined in the following form:
\begin{equation}\label{eq:QCQP}
    \begin{aligned}
        & \min_{x \in \mathbb{R}^n} & & {f(x) \coloneqq} \frac{1}{2}x^\top Q x + p^\top x \\
        & \;\; \mathrm{s.t.} & & \frac{1}{2} x^\top Q^i x + (p^i)^\top x + b^i \leq 0 ~~~ \forall ~i\in [m]  \\
        &            && x^{\text{L}} \leq x \leq x^{\text{U}} 
    \end{aligned}
\end{equation}

where $Q, Q^i \in {\mathbb{S}^{n\times n}}$, $p,p^i \in \mathbb{R}^{n}$, $b^i\in \mathbb{R}$, $x^{\text{L}} \in (\mathbb{R} \cup \{-\infty\})^n$, and $x^{\text{U}} \in (\mathbb{R} \cup \{+\infty\})^n$. The problem has $n$ optimization variables and $m$ constraints. We refer to the tuple $(m,n)$ as the \emph{problem size} of QCQP. Both the objective function and the constraints are associated with quadratic functions. 

The QCQP problem is \textit{convex} if $Q$ and $Q^i$'s are all positive semi-definite.

{
We denote the \textit{feasible set} of Problem~(\ref{eq:QCQP}) by $\mathcal{X}$.}

If $\mathcal{X} \neq \emptyset$, the QCQP is said to be \textit{feasible}; otherwise, it is said to be \textit{infeasible}. A feasible QCQP is said to be \textit{bounded} if the objective is bounded from below on $\mathcal{X}$, i,e., there exists $z \in \mathbb{R}$ such that ${f(x)} \geq z$ for every $x \in \mathcal{X}$; otherwise, it is said to be \textit{unbounded}. For a feasible and bounded QCQP, $x^* \in \mathcal{X}$ is said to be an optimal solution if {$f(x^*) \leq f(x)$}

for every $x \in \mathcal{X}$.
We remark that a QCQP always admits an optimal solution if it is feasible and bounded, but such an optimal solution might not be unique.

\subsection{Tripartite Representation of QCQPs}  \label{sec:tripartite_representation}

The first theoretical result demonstrating the representation power of GNNs in solving optimization problems was provided by~\cite{chen2022LP}. 
{Their work employs a bipartite graph representation where variables and constraints are modeled as nodes. In this encoding, the linear constraint coefficients are edge features, the right-hand-side values are constraint node features, and the objective coefficients are variable node features.}
They showed that GNNs based on this graph representation can universally approximate the optimal solution of LPs, as well as properties of feasibility and boundedness. 
This bipartite graph modeling was later extended {by encoding quadratic objective terms as edge features between variable nodes} to analyze the representation power of GNNs for LCQPs~\citep{chen2024expressive}.

Despite these advances, it remains challenging to develop graph representation to encode all information of general QCQPs while maintaining simplicity for GNN processing. Due to the presence of quadratic terms, a QCQP generally involves $\mathcal{O}(n^2 \times m)$ coefficients. Consequently, a graph encoding all QCQP information inherently exhibits a complexity of the same order, $\mathcal{O}(n^2 \times m)$. There are two natural extensions of the traditional bipartite representation of LPs/LCQPs to QCQPs.

\begin{itemize}
    \item[--] {\textbf{Hyperedge Representation}. This approach adds hyperedges to the traditional bipartite graph to represent quadratic coefficients, turning the graph into a hypergraph. However, to the best of our knowledge, most of the current GNN architectures struggle to handle hyperedges. 
    
    }

    \item[--] {\textbf{Vector Feature Representation}. In this {extension}, all coefficients are encoded as features associated with the $n$ variable nodes and the $m$ constraint nodes, resulting in a graph with vector features of varying sizes, depending on the problem. However, existing GNNs are generally incapable of processing features of varying dimensions.}
\end{itemize}

To fill this gap, we introduce an undirected \textit{tripartite graph} representation $G_{\mathrm{QCQP}} \coloneqq (V, E)$ that encodes all elements of a QCQP~(\ref{eq:QCQP}). Compared to the traditional bipartite graph modeling for LPs and LCQPs, our tripartite graph representation introduces an additional class of nodes to model the quadratic terms of variables. This modification allows us to represent QCQPs without any loss of information. In this paper, we will show that the tripartite representation enables GNNs to universally approximate solutions for convex QCQPs.

{
Formally, we model a QCQP as a tripartite graph with node sets representing variables, quadratic terms, and constraints.
\begin{itemize}
    \item \textbf{Variable nodes ($V_1$):} Let $V_1 \coloneqq \{u_1, \dots, u_n\}$, where node $u_j$ corresponds to variable $x_j$ and is associated with the feature tuple $(p_j, x^{\mathrm{L}}_j, x^{\mathrm{U}}_j)$.
    \item \textbf{Quadratic term nodes ($V_2$):} Let $\mathcal{L} \coloneqq \{(j, k) \in [n] \times [n] : j \leq k,\ |q_{j,k}| + \sum_{i=1}^m |q^i_{j,k}| > 0\}$ be the set of quadratic term indices with non-zero coefficients in either the objective or at least one constraint. Then, $V_2 \coloneqq \{v_{j,k} : (j, k) \in \mathcal{L}\}$. The feature for node $v_{j,k}$ is $2q_{j,k}$ if $j > k$, and $q_{j,j}$ if $j = k$.
    \item \textbf{Constraint nodes ($V_3$):} Let $V_3 \coloneqq \{c_1, \dots, c_m\}$, where node $c_i$ corresponds to the $i$-th constraint and is associated with the feature $b_i$.
\end{itemize}
The full node set is $V \coloneqq V_1 \cup V_2 \cup V_3$.~\footnote{Throughout, we use $u$ for variable nodes, $v$ for quadratic term nodes, and $c$ for constraint nodes. Indices $i$, $j$, and $k$ refer to constraints, variables, and quadratic terms, respectively.}
The QCQP graph contains three edge sets, each with associated weights:
\begin{itemize}
    \item \textbf{$V_1$--$V_2$ edges ($E_{12}$):} Connect variable node $u_{j'} \in V_1$ to quadratic node $v_{j,k} \in V_2$ if $j' = j$ or $j' = k$. The weight is defined as $w_{u_{j'}, v_{j,k}} \coloneqq 1$ if $j > k$, and $2$ otherwise.
    \item \textbf{$V_1$--$V_3$ edges ($E_{13}$):} Connect $u_j \in V_1$ to constraint node $c_i \in V_3$ if the linear coefficient $p^i_j \neq 0$. The weight is defined as $w_{u_j, c_i} \coloneqq p^i_j$.
    \item \textbf{$V_2$--$V_3$ edges ($E_{23}$):} Connect $v_{j,k} \in V_2$ to $c_i \in V_3$ if the quadratic coefficient $q^i_{j,k} \neq 0$. The weight is defined as $w_{v_{j,k}, c_i} \coloneqq 2q^i_{j,k}$ if $j > k$, and $q^i_{j,j}$ otherwise.
\end{itemize}
The full edge set is given by $E \coloneqq E_{12} \cup E_{13} \cup E_{23}$. For any edge, we define $w_{x,y} \coloneqq w_{y,x}$ for connected nodes $x$ and $y$.
}

\begin{figure*}[t]
    \centering
    \includegraphics[width=0.9\textwidth]{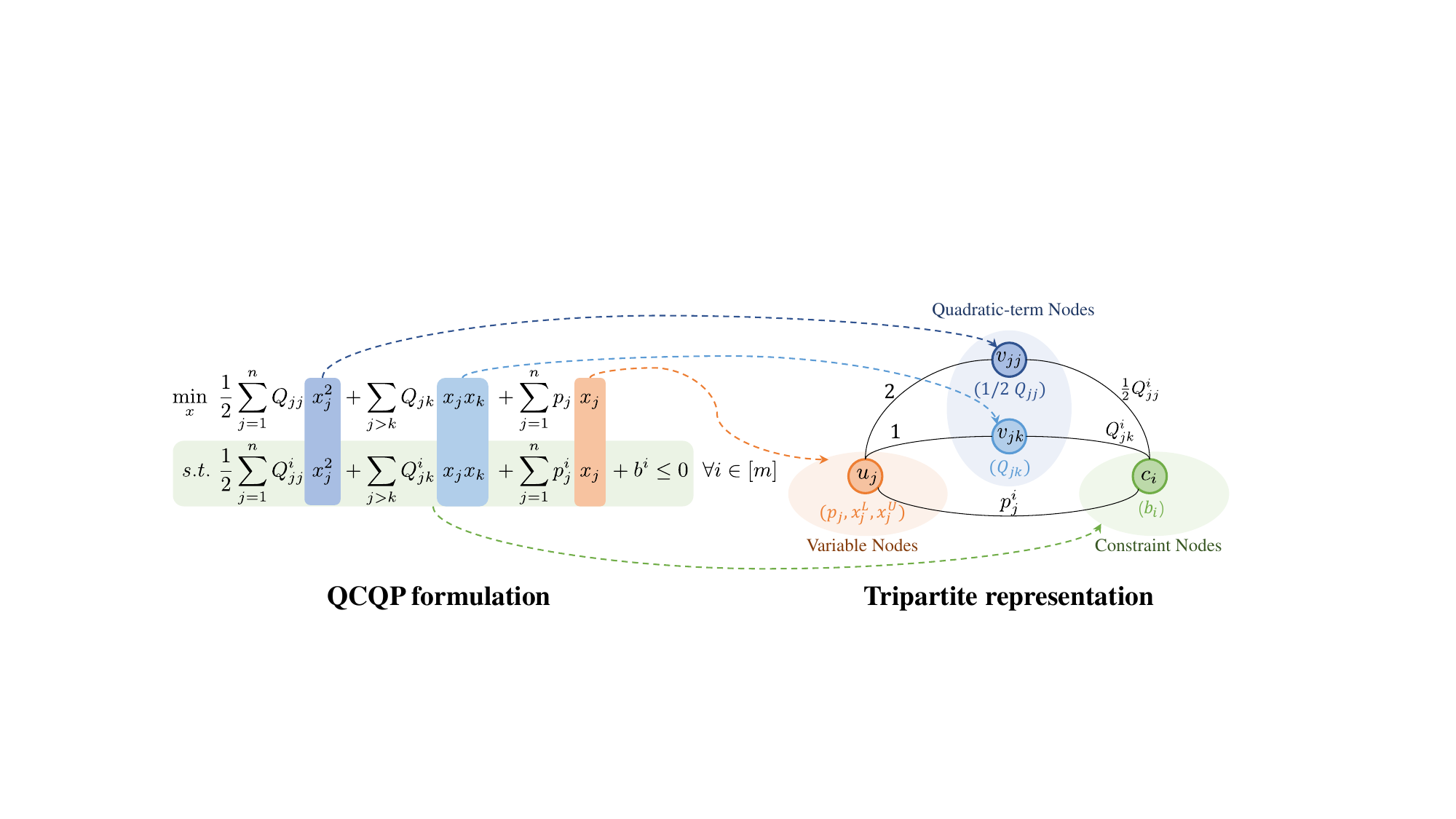}
    \caption{A tripartite representation of QCQPs. It consists of three types of nodes: variable nodes, quadratic-term nodes, and constraint nodes. All nodes and the edges connecting them are associated with coefficients from the formulation as features.}
    \label{fig:tripartite_rep}
\end{figure*}

We illustrate this representation in Figure~\ref{fig:tripartite_rep}. We remark that there is a one-to-one mapping between a QCQP and its tripartite graph representation $G_{\mathrm{QCQP}}$.
Our representation has two major advantages:

\begin{itemize}
    \item[--] {Our representation forms a simple graph, making it compatible with most existing GNNs. This facilitates the development of more efficient and task-specific GNN architectures.}
    \item[--] {Our representation can leverage the sparsity nature of QCQP problems. That is, the numbers of nodes and edges are also controlled by the total number of nonzero coefficients. Thus, our representation can be efficiently applied to large and sparse instances.}
\end{itemize}

\begin{definition}[Spaces of Convex QCQP-graphs]
    We denoted by 
    
    $\mathcal{G}^{m,n}_{\mathrm{QCQP}}$ the set of tripartite graph representations for all \textbf{convex} QCQPs with $n$ variables and $m$ constraints. \footnote{For any QCQP graph in $\mathcal{G}^{m,n}_{\mathrm{QCQP}}$, the associated convex QCQP can be characterized by its coefficient tuple {$(Q,\{Q^i\}^m_{i=1},p,\{p^i\}^m_{i=1},\{b^i\}^m_{i=1},x^\mathrm{L},x^\mathrm{U})$, where $Q, Q^i \in \mathbb{S}^n_+$}. We define a topology on $\mathcal{G}_{\mathrm{QCQP}}$: for $Q,Q^i$ and $p,p^i$ we use the topology induced by the norm of the linear mappings defined by the matrices and vectors, and for $x^{\mathrm{L}},x^{\mathrm{U}},b^i$ we use euclidean topology on $\mathbb R$ and discrete topology on the infinite values. In numerical experiments, we represent the infinite values by introducing an extra infinity indicator.}
\end{definition}

\section{Theoretical Results}

\subsection{Tripartite MP-GNNs}
\label{sec:theory_pre}
To study the capability of GNNs in representing QCQPs, we tailor the general MP-GNNs for the tripartite nature of the introduced QCQP graph representation. An overview is depicted in Figure \ref{fig:GNN}.

Specifically, we consider the family of tripartite MP-GNNs consisting of an embedding layer, $T$ message-passing layers (each comprised of four sub-layers), and a readout layer, detailed as follows:

\begin{figure}[t]
    \centering
    \includegraphics[width=0.8\textwidth]{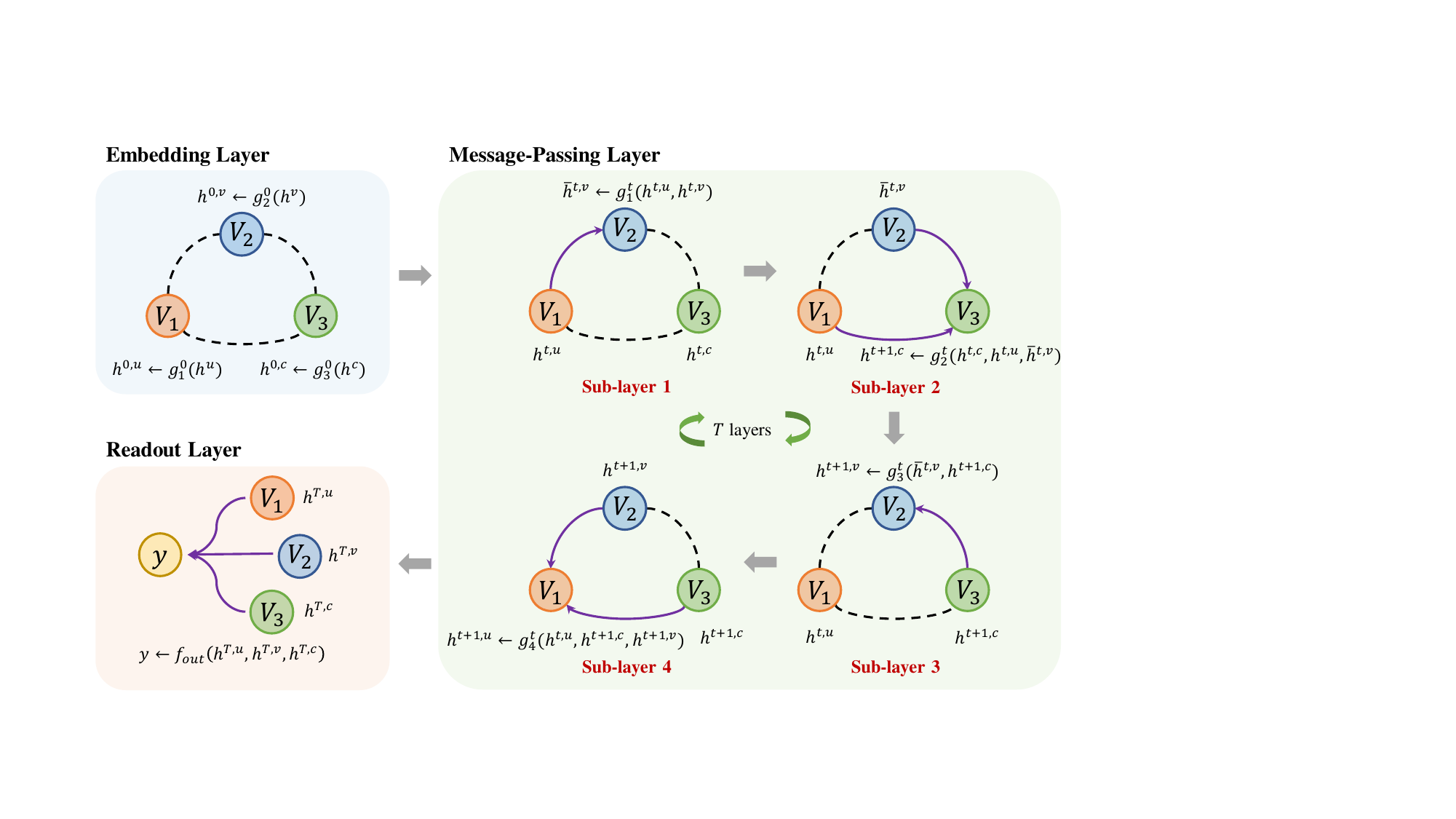}
    \caption{{An overview of the GNN architecture.}}
    \label{fig:GNN}
\end{figure}

\begin{itemize}

    \item \textbf{Embedding Layer.} The initial features for all nodes are obtained by projecting their input features into a hidden space $\mathbb{R}^{h_0}$ using learnable embedding functions:
\[
    \begin{aligned}
        h^{0,u} &\gets g^0_1(h^u) \quad \forall u\in V_1,\\
        h^{0,v} &\gets g^0_2(h^v) \quad \forall v\in V_2,\\
        h^{0,c} &\gets g^0_3(h^c) \quad \forall c\in V_3,
    \end{aligned}
\]
where $h^u$, $h^v$, and $h^c$ are the input features of nodes in $V_1$, $V_2$, and $V_3$, respectively, and $g^0_1, g^0_2, g^0_3$ are the embedding functions for each node type.

    \item \textbf{Message-Passing Layer.} Each message-passing layer employs four distinct sub-layers to update node features, using learnable functions \( f^t_l \) and \( g^t_l \). Each sub-layer updates the features of nodes in one set by aggregating messages from a specific subset of neighboring nodes, as detailed below.

\begin{itemize}
    \item \textbf{Sub-layer 1: Update Quadratic Nodes from Variables (\(V_1 \rightarrow V_2\))}:
        \begin{equation*}
            \bar h^{t,v} \gets g_1^t\left(h^{t,v},\sum_{u \in V_1} w_{u,v} f_1^t(h^{t,u})\right) \quad \forall v \in V_2.
        \end{equation*}
    This step computes an intermediate representation \(\bar h^{t,v}\) for each quadratic node \(v\) based on incoming messages from variable nodes \(u \in V_1\).

    \item \textbf{Sub-layer 2: Update Constraint Nodes from Variables and Quadratic Nodes (\(V_1 + V_2 \rightarrow V_3\))}:
        \begin{equation*}
            h^{{t+1},c} \gets g_2^t\left(h^{t,c},m^t_{13},m^t_{23}\right) \quad \forall c\in V_3,
        \end{equation*}
        where \(m^t_{13} \coloneqq \sum_{u\in V_1} w_{u,c}f_2^t(h^{t,u})\) and \(m^t_{23} \coloneqq \sum_{v\in V_2} w_{v,c}f_3^t(\bar h^{t,v})\) denote the messages from variable nodes and quadratic nodes, respectively. This sub-layer produces the updated constraint node features for the next step.

    \item \textbf{Sub-layer 3: Update Quadratic Nodes from Constraints (\(V_3 \rightarrow V_2\))}:
        \begin{equation*}
            h^{{t+1},v} \gets g_3^t\left(\bar h^{t,v},\sum_{c\in V_3} w_{c,v}f_4^t(h^{{t+1},c})\right)\quad \forall v\in V_2.
        \end{equation*}
        Here, the intermediate representations \(\bar h^{t,v}\) are updated using the new constraint node features \(h^{{t+1},c}\) to produce the final quadratic node features for this layer.

    \item \textbf{Sub-layer 4: Update Variable Nodes from Constraints and Quadratic Nodes (\(V_3 + V_2 \rightarrow V_1\))}:
        \begin{equation*}
            h^{{t+1},u} \gets g_4^t\left(h^{t,u},m^t_{31},m^t_{21}\right) \quad \forall u\in V_1,
        \end{equation*}
        where \(m^t_{31} \coloneqq \sum_{c\in V_3} w_{c,u}f_5^t(h^{{t+1},c})\) and \(m^t_{21}\coloneqq \sum_{v\in V_2} w_{v,u}f_6^t(h^{{t+1},v})\) are the messages from constraint nodes and the updated quadratic nodes, respectively. This completes the update cycle for the layer.
\end{itemize}

The four sub-layers implement a structured, alternating update scheme. The first two sub-layers propagate information \textit{from} variable nodes \textit{to} constraint nodes, passing through the quadratic nodes to incorporate information about quadratic terms. This allows the constraint node representations to be informed by the current state of the variables and their quadratic interactions.

Conversely, the last two sub-layers propagate information in the reverse direction, \textit{from} the updated constraint nodes \textit{to} the variable nodes. By again routing messages through the quadratic nodes, the network learns the relationship between constraints and quadratic terms, using this to refine the variable node representations. This bidirectional, structured message-passing is crucial for capturing the complex dependencies when representing QCQPs.

    \item \textbf{Readout Layer.} The readout layer maps the final node features from the $T$-th message-passing layer to an output $y \in \mathbb{R}^s$ using a learnable function $f_{\mathrm{out}}$. We consider two output types:

\begin{itemize}
    \item \textbf{Graph-level output} ($s=1$): The output is a scalar representing the entire graph. We compute it as:
    \begin{equation*}
        y = f_{\mathrm{out}}\left(a_1, a_2, a_3\right),
    \end{equation*}
    where $a_1 = \sum_{u \in V_1} h^{T,u}$, $a_2 = \sum_{v \in V_2} h^{T,v}$, and $a_3 = \sum_{c \in V_3} h^{T,c}$ are the aggregated features from variable, quadratic, and constraint nodes, respectively.
    
    \item \textbf{Node-level output} ($s=n$): The output is a vector where each component corresponds to a variable node. For each node $u_j \in V_1$:
    \begin{equation*}
        y_j = f_{\mathrm{out}}\left(h^{T,u_j}, a_{1,u_j}, a_2, a_3\right), \quad \forall j \in [n]
    \end{equation*}
    where $a_{1,u_j} = \sum_{u \in V_1 \setminus \{u_j\}} h^{T,u}$ aggregates features from all other variable nodes, providing contextual information about the graph excluding node $u_j$ itself.
\end{itemize}
\end{itemize}

  \color{black}

\begin{definition}[Spaces of GNNs]
\label{def:GNN_space}
    Let $\mathcal{F}_{\mathrm{QCQP}}(\mathbb{R}^s)$ denote the collection of all tripartite MP-GNNs, parameterized by continuous embedding functions {$g^0_{l}, l \in \{1,2,3\}$}, continuous hidden functions in the message passing layers {$g^t_{l}$ with $l \in \{1,2,3,4\}$, $h^t_{l}$ with $l \in \{1,2,3,4,5,6\}$}, and the continuous readout function $f_{\mathrm{out}}$. Specifically, for a given problem size $(m,n)$ of QCQP, there exists a subset of GNNs in $\mathcal{F}_{\mathrm{QCQP}}(\mathbb{R}^s)$ that maps the input space $\mathcal{G}^{m,n}_{\mathrm{QCQP}}$ to the output space $\mathbb{R}^s$. This subset of GNNs are denoted by $\mathcal{F}^{m,n}_{\mathrm{QCQP}}(\mathbb{R}^s)$.
\end{definition}

We define the following target functions, characterizing some key properties on learning an end-to-end network to predict the optimal solutions of convex QCQPs:

\begin{definition}[Target mappings]
\label{def:QCQP_properties}
Let $G_{\mathrm{QCQP}}$ be a tripartite graph representation of a QCQP problem. We define the following target mappings.
\begin{itemize}
    \item \textit{Feasibility mapping}:
    We define $\Phi_{\mathrm{feas}}(G_{\mathrm{QCQP}}) = 1$ if the QCQP problem is feasible and $\Phi_{\mathrm{feas}}(G_{\mathrm{QCQP}}) = 0$ otherwise.
        
    \item \textit{Boundedness mapping}: for a feasible QCQP problem, we define $\Phi_{\mathrm{bound}}(G_{\mathrm{QCQP}}) = 1$ if the QCQP problem is bounded and $\Phi_{\mathrm{bound}}(G_{\mathrm{QCQP}}) = 0$ otherwise.
                 
    \item \textit{Optimal value mapping}: for a feasible and bounded QCQP problem, we set $\Phi_{\mathrm{opt}}(G_{\mathrm{QCQP}})$ to be its optimal objective value.

    \item \textit{Optimal solution mapping}: for a feasible, bounded QCQP problem, there must exist at least an optimal solution, but the optimal solution might not be unique. However, if the QCQP is convex, there exists a unique optimal solution $x^*$ with the smallest $\ell_2$-norm among all optimal solutions. Therefore, for a \textit{convex} QCQP we define the optimal solution mapping to be $\Phi_{\mathrm{sol}}(G_{\mathrm{QCQP}})$ = $x^*$. Since the optimal solution with the smallest $\ell_2$-norm may not be unique for non-convex QCQPs, we do not define its optimal solution mapping.\footnote{In fact, Section \ref{sec:non_convex_counter_examples} shows that there exists a pair of non-convex QCQPs that cannot be distinguished by any {MP-GNNs based on tripartite graph representation}. Thus, even if an optimal solution mapping for non-convex QCQPs is defined, MP-GNNs cannot universally approximate it.}
    \end{itemize}
\end{definition}

\subsection{Universal approximation for convex QCQPs}

{
Our theoretical analysis rests on the following key lemma, which establishes that the tripartite WL-test can transfer solutions between equivalent QCQP instances.
\begin{lemma} \label{lemma:solution_passing}
    Let $\mathcal{I},\Bar{\mathcal{I}}$ (with given sizes $m,n$, encoded by $G,\Bar{G}\in\mathcal{G}^{m,n}_{\mathrm{QCQP}}$) be two QCQP instances. If
    the tripartite WL-test cannot separate $G$ from $\Bar{G}$, then for any feasible solution $x$ of \(\mathcal{I}\), there exists a feasible solution \(\Bar{x}\) for \(\Bar{\mathcal{I}}\) such that:
    \begin{itemize}
        \item[(i)] $\frac{1}{2} \Bar{x}^\top\Bar{Q}\Bar{x}+\Bar{p}^\top \Bar{x} \leq \frac{1}{2} x^\top Qx + p^\top x$;
        \item[(ii)] $\|\Bar{x}\|_2  \leq \|x\|_2$.
    \end{itemize}
\end{lemma}
Lemma~\ref{lemma:solution_passing} resolves a fundamental challenge. In LPs/LCQPs, solution transfer between equivalent instances can be achieved through straightforward averaging of variable values within node color classes. For QCQPs, this direct approach fails because the average of products does not equal the product of averages.
Our core contribution is proving that the \textit{stable colorings} generated by the tripartite WL-test provide sufficient structural information to overcome this limitation. The solution $\Bar{x}$ is constructed by averaging the values of the original solution $x$ across all variables belonging to the same color class in $G$. While this averaging does not preserve individual quadratic terms, we prove that the \textit{collective quadratic forms}--the weighted sums of quadratic terms within each constraint--are appropriately controlled due to the color stability of the graph representation. This ensures that feasibility is maintained in $\Bar{\mathcal{I}}$ and the objective value is non-increasing.
This lemma establishes the fundamental separation power of the tripartite WL-test: it can distinguish between QCQP instances that have different optimal values or solution structures.
}

Building on this foundation, we demonstrate that for convex QCQPs, any target function in Definition \ref{def:QCQP_properties} can be universally approximated by MP-GNNs. Formally, we have the following theorem.

\begin{theorem}\label{thm:approximation} 
For any probability measure $\mathbb{P}$ on the space of convex QCQPs $\mathcal{G}^{m,n}_{\mathrm{QCQP}}$  and any $\delta, \varepsilon > 0$, there exists $F \in \mathcal{F}^{m,n}_{\mathrm{QCQP}}(\mathbb{R}^s)$ such that for any target mapping $\Phi:\mathcal{G}^{m,n}_{\mathrm{QCQP}}\to\mathbb{R}^s$ defined in Definition~\ref{def:QCQP_properties}, we have
    \begin{equation}
        \mathbb{P}\left\{||F(G_{\mathrm{QCQP}})-\Phi(G_{\mathrm{QCQP}})|| > \delta \right\} < \varepsilon.  \label{eq:main_theorem}
    \end{equation}
\end{theorem}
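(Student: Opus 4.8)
The plan is to adapt the two-stage template used for linear and linearly constrained quadratic programs: first prove a \emph{separation} property of the tripartite GNN class, then upgrade it to universal approximation through a Stone--Weierstrass density argument, preceded by a reduction that absorbs the discontinuity of the $\{0,1\}$-valued targets. Since $\Phi_{\mathrm{feas}}$ and $\Phi_{\mathrm{bound}}$ are discontinuous, I would first apply Lusin's theorem to the measure $\mathbb{P}$: for the given $\varepsilon$ there exists a compact set $K\subseteq\mathcal{G}^{m,n}_{\mathrm{QCQP}}$ with $\mathbb{P}(K)>1-\varepsilon$ on which the chosen target $\Phi$ is continuous. Establishing \eqref{eq:main_theorem} then reduces to uniformly approximating a \emph{continuous} target on $K$ to within error $\delta$ by some $F\in\mathcal{F}^{m,n}_{\mathrm{QCQP}}(\mathbb{R}^s)$, since the complement contributes at most $\varepsilon$ probability.

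The core of the proof is the separation step, and its correct formulation must account for permutation equivariance: message passing can never distinguish two graphs related by an automorphism of the tripartite structure, so the goal is to show that $\mathcal{F}^{m,n}_{\mathrm{QCQP}}(\mathbb{R}^s)$ separates points of the quotient of $\mathcal{G}^{m,n}_{\mathrm{QCQP}}$ under graph isomorphism, while each target factors through the same quotient. The latter is where convexity enters: feasibility, boundedness, and optimal value are manifestly isomorphism-invariant, and for $\Phi_{\mathrm{sol}}$ the minimum-$\ell_2$-norm optimal solution is \emph{unique} for convex QCQPs (Definition~\ref{def:QCQP_properties}), hence invariant under any automorphism of the instance; this invariance is exactly compatible with the equivariance of the node-level readout, so the solution target is well defined on the quotient.

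The forward direction --- that the GNN genuinely realizes this separation --- is the main obstacle. I would run a Weisfeiler--Leman-style color-refinement analysis on the tripartite graph, showing that, with suitable choices of the continuous embedding, message-passing, and readout functions, the stable colors produced after finitely many rounds recover the full coefficient tuple $(Q,\{Q^i\},p,\{p^i\},\{b^i\},x^{\mathrm L},x^{\mathrm U})$ up to isomorphism, so that no two non-isomorphic convex QCQPs are collapsed. The quadratic-term nodes $V_2$ are indispensable here: the edge weights on $E_{12}$ and $E_{23}$ expose the individual entries $q_{j,k}$ and $q^i_{j,k}$ to the refinement, which a purely bipartite LP/LCQP-style scheme cannot see. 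Convexity is again used to exclude the indistinguishable pairs that exist in the non-convex regime (see the footnote to Definition~\ref{def:QCQP_properties} and Section~\ref{sec:non_convex_counter_examples}); I expect the argument to leverage positive semidefiniteness of $Q$ and the $Q^i$ together with the KKT/duality characterization of the optimum to certify that WL-equivalent convex instances must agree on every target. Making this reconstruction quantitative and uniform over the compact set $K$, rather than merely pointwise, is the delicate part.

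Finally, with a point-separating, permutation-invariant family of GNNs whose readout $f_{\mathrm{out}}$ ranges over all continuous functions (so that the realizable outputs form an algebra closed under the operations needed), I would invoke a generalized Stone--Weierstrass theorem on $C(K)$ --- in its vector-valued, equivariant form for the node-level target $\Phi_{\mathrm{sol}}$ --- to approximate the continuous target uniformly on $K$ within $\delta$. Combining this with the Lusin reduction yields \eqref{eq:main_theorem} for each of the four targets of Definition~\ref{def:QCQP_properties}, completing the proof.
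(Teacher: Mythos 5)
Your outer scaffolding---the Lusin reduction for the discontinuous targets, the observation that the targets must factor through what the GNN class can separate, and the final equivariant Stone--Weierstrass density argument on a compact set---matches the paper's proof. The genuine gap is in your separation step. You propose to show that the stable colors of a WL-style refinement ``recover the full coefficient tuple $(Q,\{Q^i\},p,\{p^i\},\{b^i\},x^{\mathrm L},x^{\mathrm U})$ up to isomorphism, so that no two non-isomorphic convex QCQPs are collapsed.'' This is false: color refinement is strictly weaker than isomorphism testing, and convexity does not repair this. Already for convex QCQPs with all quadratic coefficients zero---i.e., LPs, where $V_2=\emptyset$ and the tripartite WL test degenerates to the bipartite one---there exist non-isomorphic instances with identical stable colorings; such pairs are precisely why the LP/LCQP antecedents prove \emph{target-invariance} under WL-equivalence rather than reconstruction of the instance. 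Your appeal to positive semidefiniteness plus a ``KKT/duality characterization'' to ``exclude the indistinguishable pairs'' cannot work as stated, because indistinguishable non-isomorphic pairs do exist in the convex regime; what must instead be shown is that every such pair nevertheless agrees on each target mapping. As written, your plan would stall at exactly this point, and the quotient on which you invoke Stone--Weierstrass would be the wrong (too fine) one.

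The missing idea---the paper's core lemma---is a solution-transfer construction on WL color classes. Given WL-indistinguishable convex instances $\mathcal{I},\bar{\mathcal{I}}$ with nodes indexed so that stable colors agree, and a feasible point $x$ of $\mathcal{I}$, define $\bar{x}_j$ as the average of $x_{j'}$ over all variable nodes sharing the stable color $J$ of $u_j$. Stability of the coloring makes the color-aggregated edge-weight sums $S(J,K;G)$, $S(J,I;G)$, etc., well defined, which forces the linear parts of the objective and constraints to average exactly across color classes. Convexity enters concretely in the quadratic parts: within the affine subspace that fixes each color-class coordinate sum, $\bar{x}$ is a critical point of each color-aggregated quadratic form, hence by PSD-ness a \emph{global} minimizer there, so $\bar{x}$ is feasible for $\bar{\mathcal{I}}$ with objective value and $\ell_2$-norm no larger than those of $x$. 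From this single lemma, feasibility, unboundedness, optimal value, and (using uniqueness of the minimum-norm optimum together with a back-and-forth symmetrization) the optimal-solution map all transfer across WL-equivalent instances, which is the separation statement your Stone--Weierstrass step actually needs. A secondary, fixable omission: the compact set produced by Lusin's theorem must be made invariant under the permutation group $\Sigma=S_n\times S_m$ (the paper intersects its images under all $(\sigma,\tau)\in\Sigma$, at the cost of replacing $\varepsilon$ by $\varepsilon/|\Sigma|$) before the equivariant form of the Weierstrass theorem applies.
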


Theorem \ref{thm:approximation} highlights that sufficiently expressive GNNs can predict the feasibility, boundedness, optimal value, and optimal solution for convex QCQP problems with an arbitrarily small error. The proof of Theorem \ref{thm:approximation} is provided in Appendix~\ref{sec:detailed_proof}.

{The convexity requirement is essential for our theoretical results. Convexity ensures that the averaged solution $\Bar{x}$ constructed in Lemma~\ref{lemma:solution_passing} not only remains feasible but also satisfies the critical objective value bound in condition (i). This property enables the translation of the WL-test's separation power into the GNN's approximation capability for the target functions $\Phi$.}

\subsection{MP-GNNs can not represent general non-convex QCQPs}
\label{sec:non_convex_counter_examples}

In contrast to convex QCQPs, MP-GNNs based on tripartite graph representation do not possess universal representation power for non-convex QCQPs. Formally, we have the following propositions.

\begin{proposition}\label{prop:can_not_represent_non_convex}
    There exists non-convex QCQP instances $\mathcal{I},\bar{\mathcal{I}}$ encoded by tripartite graph representation $G,\Bar{G}$ respectively, such that 
    $\Phi(G)_{\text{feas}}\not=\Phi_{\text{feas}}(\Bar{G})$, but any GNN $F\in\mathcal{F}_{\mathrm{QCQP}}(\mathbb{R})$ gives $F(G)=F(\Bar{G})$.
\end{proposition}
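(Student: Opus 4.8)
The plan is to exhibit two \emph{non-convex} QCQP instances $\mathcal I,\bar{\mathcal I}$ of common size $(m,n)=(1,6)$ whose tripartite graphs carry identical node features and identical edge weights, differing only in the \emph{incidence pattern} between variable and quadratic nodes, chosen to be the canonical $1$-WL--indistinguishable pair: the $6$-cycle $C_6$ versus the disjoint union of two triangles $2C_3$. Concretely, both instances have trivial objective ($Q=0,\ p=0$), a symmetric box $x^{\mathrm L}=-\mathbf 1,\ x^{\mathrm U}=\mathbf 1$, and a single quadratic constraint $\tfrac12 x^\top Q^1 x + b^1 \le 0$ in which $Q^1$ is the (zero-diagonal) adjacency matrix of the coupling graph, so that $\tfrac12 x^\top Q^1 x = \sum_{(j,k)\in E} x_j x_k$. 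Since $Q^1$ has zero trace but nonzero entries, it is indefinite in both cases, making both QCQPs non-convex; the common constant $b^1$ will be tuned to separate feasibility.

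First I would establish the feasibility gap. Because $Q^1$ has zero diagonal, the constraint function $\sum_{(j,k)\in E} x_j x_k$ is multilinear (affine in each coordinate separately), so its minimum over the box $[-1,1]^6$ is attained at a vertex of $\{-1,1\}^6$ and equals the ground-state value of the corresponding MAX-CUT/Ising form. For the bipartite $C_6$ a proper $2$-colouring makes every edge product equal to $-1$, giving minimum $-6$; for $2C_3$ each odd triangle contributes at best $-1$ (via $x_1x_2+x_2x_3+x_3x_1=\tfrac12(s^2-3)$ with $s=x_1+x_2+x_3$), giving minimum $-2$. Choosing $b^1=4$ then makes the $C_6$-instance feasible (the value $-6+4=-2\le 0$ is achieved) and the $2C_3$-instance infeasible (the constraint value is at least $-2+4=2>0$), so $\Phi_{\mathrm{feas}}(G)=1\ne 0=\Phi_{\mathrm{feas}}(\bar G)$.

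The core step is to show that every $F\in\mathcal F_{\mathrm{QCQP}}(\mathbb R)$ returns the same value on the two graphs. I would argue by induction on the message-passing depth $t$ that, in \emph{both} graphs simultaneously, all variable embeddings equal a common vector $a^t$, all quadratic embeddings a common $b^t$ (and intermediate $\bar b^t$), and the single constraint embedding a common $d^t$. The base case holds because the embedding layer sees identical features $(0,-1,1)$, $0$, and $b^1$. For the inductive step, each of the four sub-layer updates aggregates only a \emph{weighted sum over neighbours}; using that every variable node has exactly two quadratic neighbours of weight $2$, every quadratic node has two variable neighbours of weight $2$ and one constraint neighbour of weight $2$, and the constraint node has six quadratic neighbours of weight $2$ (with no $V_1$--$V_3$ edges since $p^1=0$), these sums reduce to fixed functions of $(a^t,b^t,\bar b^t,d^t)$ that do \emph{not} depend on which variables a quadratic node is attached to. Hence the updated embeddings stay constant within each type and identical across the two graphs, and the graph-level readout, being $f_{\mathrm{out}}(6a^T,6b^T,d^T)$ in both cases, yields $F(G)=F(\bar G)$.

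I expect the main obstacle to be the rigorous verification of this indistinguishability, specifically confirming that the constraint node---which is adjacent to \emph{all} quadratic nodes---cannot break the $C_6$/$2C_3$ asymmetry: it must see the same multiset of neighbour states in both graphs at every layer, which the induction guarantees precisely because that asymmetry is only ever exposed through the order-forgetting variable-to-quadratic aggregation $\sum_{u} w_{u,v}F_1^t(h^{t,u})$. A secondary point needing care is the justification that the continuous box minimum is attained at a $\{-1,1\}$ vertex, for which the zero-diagonal multilinearity argument is essential.
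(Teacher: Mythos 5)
Your proposal is correct and is essentially the paper's own argument: the same $C_6$-versus-$2C_3$ coupling pair, moved into a single quadratic constraint with a tuned right-hand side to flip feasibility, with indistinguishability following from the $2$-regularity of both coupling graphs so that all type-wise embeddings stay constant and identical across the two graphs at every layer. Your variations are cosmetic rather than structural --- box bounds $[-1,1]^n$ with a multilinear vertex-attainment argument in place of the paper's unit-ball constraint (which also lets you drop the diagonal quadratic nodes $v_{j,j}$), and a direct layer-by-layer induction on GNN embeddings instead of tracing the tripartite WL colors and invoking the GNN--WL equivalence proposition.
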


\begin{proposition}\label{prop:can_not_represent_non_convex2}
    There exists non-convex QCQP instances $\mathcal{I},\bar{\mathcal{I}}$ encoded by tripartite graph representation $G,\Bar{G}$ respectively, such that
    \begin{itemize}
        \item[(i)] $\Phi(G)_{\text{opt}}\not=\Phi_{\text{opt}}(\Bar{G})$;
        \item[(ii)] the optimal solution sets of $\mathcal{I}$ and $\bar{\mathcal{I}}$ do not intersect;
        \item[(iii)] any GNN $F\in\mathcal{F}_{\mathrm{QCQP}}(\mathbb{R})$ gives $F(G)=F(\Bar{G})$.
    \end{itemize}
\end{proposition}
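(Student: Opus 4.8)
The plan is to exploit the fact that message-passing GNNs with sum aggregation separate tripartite graphs only up to color refinement (the $1$-dimensional Weisfeiler--Leman test): if $G$ and $\bar G$ have the same stable coloring with the same color multiset, then every $F\in\mathcal{F}_{\mathrm{QCQP}}(\mathbb{R})$ satisfies $F(G)=F(\bar G)$. This bound on separation power is exactly what underlies the analysis supporting the positive results, so it suffices to (i) construct two non-convex instances whose tripartite graphs are color-refinement equivalent, and then (ii) verify on the optimization side that their optimal values differ and their optimal-solution sets are disjoint.

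For the construction I would use the canonical WL-indistinguishable pair of $2$-regular graphs. Fix $n=6$ and $m=0$ (only the box is present, so both instances are feasible and bounded and $\Phi_{\mathrm{opt}}$ is defined), and let $H$ be a coupling graph on vertex set $[6]$: take $H=C_6$ for $\mathcal{I}$ and $H=2C_3$ (two disjoint triangles) for $\bar{\mathcal{I}}$. In both cases set the objective to $\tfrac12 x^\top Q x + \epsilon\,\mathbf{1}^\top x$ with $Q$ the (zero-diagonal) adjacency matrix of $H$ and a small $\epsilon>0$, and impose $x\in[-1,1]^6$; since $Q$ is indefinite, both instances are genuinely non-convex. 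Because the diagonal vanishes, $V_2$ consists of one quadratic node per edge of $H$, every variable node carries the identical feature $(\epsilon,-1,1)$, every quadratic node carries the identical feature $2$, and $E_{12}$ is exactly the vertex--edge incidence of $H$. Thus the tripartite graph is the subdivision $S(H)$: for $C_6$ this is $C_{12}$, and for $2C_3$ it is $C_6\sqcup C_6$. Both are $2$-regular and properly $2$-colored by node type with identical features, so color refinement is already stable at the type-coloring and returns the same color multiset (six variable, six quadratic) on both graphs; hence no $F\in\mathcal{F}_{\mathrm{QCQP}}(\mathbb{R})$ distinguishes them.

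It remains to analyze the two programs. The objective is multilinear in $x$ (no squared terms), so its minimum over the box is attained at a vertex of $\{-1,1\}^6$, where $\tfrac12 x^\top Q x=\sum_{\{j,k\}\in E(H)} x_j x_k$. For $C_6$ (bipartite) a proper $\pm1$ two-coloring makes all six products equal $-1$, giving optimal value $-6$, attained exactly at the two antipodal alternating vertices, both of which satisfy $\mathbf{1}^\top x=0$. For $2C_3$ each triangle (odd) contributes at least $-1$, so the quadratic part is minimized at $-2$; the small tilt then selects, within each triangle, the configurations with two $-1$ entries and one $+1$, giving optimal value $-2-2\epsilon$ and forcing $\mathbf{1}^\top x=-2$ at every optimizer. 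Since the optimal values $-6$ and $-2-2\epsilon$ differ, and the linear functional $\mathbf{1}^\top x$ takes the value $0$ on all optimizers of $\mathcal{I}$ but $-2$ on all optimizers of $\bar{\mathcal{I}}$, the two optimal-solution sets are disjoint, establishing conditions (1)--(3).

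The main obstacle is condition (2): disjointness of the optimal sets is not automatic. With $\epsilon=0$ the alternating $C_6$-optimizer $(1,-1,1,-1,1,-1)$ is also non-monochromatic on each triangle and hence optimal for $2C_3$, so the unperturbed construction already yields different optimal values but overlapping optimal sets. The symmetric linear tilt $\epsilon\,\mathbf{1}^\top x$ is what separates the optimizers through the invariant $\mathbf{1}^\top x$ while preserving color-refinement equivalence, since it only changes the common variable-node feature. The remaining care points are routine: confirming that $\epsilon$ is small enough that the optimizers stay among the quadratic minimizers (the quadratic gap is at least $1$ in each instance), verifying via the multilinearity/vertex argument that the optimal sets are exactly the claimed vertex sets, and invoking the separation-power bound to turn color-refinement equivalence into $F(G)=F(\bar G)$ for all $F\in\mathcal{F}_{\mathrm{QCQP}}(\mathbb{R})$.
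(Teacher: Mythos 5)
Your proposal is correct and follows essentially the same route as the paper: the paper's own counterexample is exactly the $C_6$-versus-two-triangles coupling pattern on six variables (there realized with the unit-ball constraint $\sum_i x_i^2\le 1$, whose eigenstructure gives optimal values $-1$ and $-\frac12$ with automatically disjoint optimal sets), combined with the same WL--GNN separation-power equivalence (Proposition~\ref{prop:message_passing_GNN_has_equal_separation_power}) applied to the indistinguishable tripartite graphs. Your box-constrained variant with the $\epsilon\,\mathbf{1}^\top x$ tilt is a sound alternative instantiation --- the tilt is indeed necessary in your setting, since at $\epsilon=0$ the alternating vertex $(1,-1,1,-1,1,-1)$ is optimal for both programs --- but the underlying idea and key lemma are the same.
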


Proposition \ref{prop:can_not_represent_non_convex} implies that GNNs cannot universally predict the feasibility of non-convex QCQPs. Proposition \ref{prop:can_not_represent_non_convex2} implies that GNNs can neither universally predict the optimal value nor the optimal solution of non-convex QCQPs. We prove both propositions by constructing counter-examples. Below we present the counter-example for Proposition \ref{prop:can_not_represent_non_convex2}. We defer the formal proof of both propositions to Appendix \ref{sec:GNN_not_enough_separation_power_on_non_convex_QCQPs_proof}.

Consider the following pair of non-convex QCQPs:

{
\begin{equation}\label{eq:non_convex_instance_1}
    \begin{aligned}
        & \underset{x \in 
    \mathbb{R}^6}{\min}  & & x_1x_2+x_2x_3+x_3x_1+x_4x_5+x_5x_6+x_6x_4 \\
        & \;\; \text{s.t.} & & \sum_i x_i^2\le 1 \\
    \end{aligned}
\end{equation}
\begin{equation}\label{eq:non_convex_instance_2}
    \begin{aligned}
        & \underset{x \in 
        \mathbb{R}^6}{\min}  & & x_1x_2+x_2x_3+x_3x_4+x_4x_5+x_5x_6+x_6x_1 \\
        & \;\; \text{s.t.} & & \sum_i x_i^2\le 1 \\
    \end{aligned}
\end{equation}
}
\normalsize

\begin{figure*}[t]
    \centering
    \includegraphics[width=0.8\textwidth]{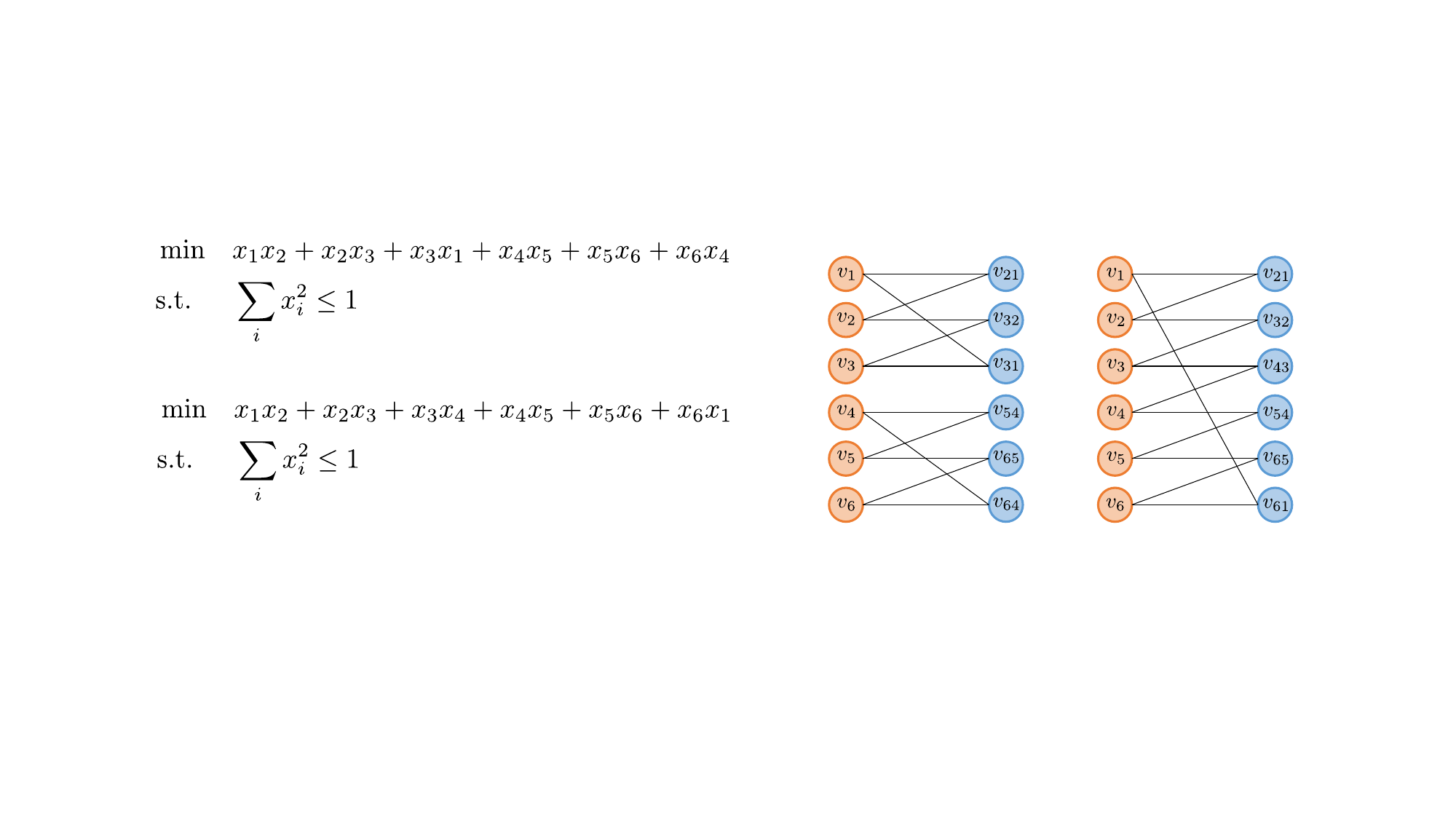}
    \caption{Left: two QCQP instances for proving Prop. \ref{prop:can_not_represent_non_convex}. Right: Parts of the corresponding tripartite graph representations to show the difference.}
    \label{fig:non_convex_examples}
\end{figure*}

{
For the former, the optimal objective value is $\Phi_{\mathrm{obj}}=-\frac12$, and all optimal solutions are given by 
\begin{equation*}
    \left\{x \in \mathbb{R}^6: x_1+x_2+x_3=0,x_4+x_5+x_6=0,\sum_i x_i^2=1\right\}.
\end{equation*}
For the latter, the optimal objective value $\Phi_{\mathrm{obj}}=-1$, and all optimal solutions are given by 
\begin{equation*}
    \left\{x \in \mathbb{R}^6:x_1=x_3=x_5=-x_2=-x_4=-x_6=\pm\frac{\sqrt{6}}{6}\right\}. 
\end{equation*}
}

We see that the optimal values of Problem~(\ref{eq:non_convex_instance_1}) and Problem~(\ref{eq:non_convex_instance_2}) are different, and their optimal solution sets do not intersect. The tripartite graph representations of the two instances are illustrated in Figure~\ref{fig:non_convex_examples}. We will further demonstrate in appendix \ref{sec:GNN_not_enough_separation_power_on_non_convex_QCQPs_proof} that any GNNs on the two 
tripartite graphs gives the same output. Thus, Problem~(\ref{eq:non_convex_instance_1}) and Problem~(\ref{eq:non_convex_instance_2}) serve as a valid counter-example for proving Proposition \ref{prop:can_not_represent_non_convex2}.

{We remark that Propositions~\ref{prop:can_not_represent_non_convex} and~\ref{prop:can_not_represent_non_convex2} demonstrate limitations specific to our proposed tripartite MP-GNN architecture. They do not rule out the potential for other, more powerful graph representations or network architectures to achieve universal approximation for non-convex QCQPs.}

\section{Computational Experiments}

In this section, we present empirical experiments to validate the proposed theoretical results. The corresponding source code is available at \href{https://github.com/NetSysOpt/L2QP}{https://github.com/NetSysOpt/L2QP}.

\subsection{Learning tasks}
In Theorem \ref{thm:approximation}, we established that there exists a function $F \in \mathcal{F}^{m,n}_{\mathrm{QCQP}}(\mathbb{R}^s)$ capable of approximating the target mapping $\Phi$ with an arbitrarily small error. To empirically confirm this claim, we design three supervised learning tasks to find such functions $F_{\text{feas}}$, $F_{\text{obj}}$ and $F_{\text{sol}}$, which are responsible for predicting feasibility, objective values, and optimal solutions, respectively.  For each task, let $\{(G_{i},y_i)\}_{i=1}^N$ be a given dataset, where $G_i$ represents a QCQP instance and $y_i$ denotes its corresponding label. The function family $\mathcal{F}^{m,n}_{\mathrm{QCQP}}(\mathbb{R}^s)$ is constructed using the tripartite MP-GNNs as defined in Definition \ref{def:GNN_space}. With all these ingredients ready, the learned function is obtained by  $F = \arg\min_{f\in \mathcal{F}^{m,n}_{\mathrm{QCQP}}(\mathbb{R}^s)} \frac{1}{N}\sum_{i=1}^N L\left(f(G_i),y_i\right)$, where $L(\cdot,\cdot)$ is the loss function. {Specifically, we use mean squared error for predicting objective values and optimal solutions, while binary cross-entropy loss is employed for predicting feasibility. We chose cross-entropy loss for this binary classification task as it is more suitable than a regression-based approach that would be needed if we were to predict numerical degrees of constraint violation instead of a final feasibility state.}

\subsection{Data generation}
To facilitate the supervised learning approach described above, we generate three datasets of convex QCQPs by randomly sampling coefficients from normal distributions. These datasets consist of a general QCQP (GP), a constrained least squares problem (CLS), and a trust-region subproblem (TRS). The formulation and generation strategy for each dataset are outlined below.

\paragraph{GP:} A general QCQP formulation is given in (\ref{eq:QCQP}). All coefficients of $Q$, $p$, $Q^i$, $p^i$, and $b^i$ in the objective and constraints are independently sampled from the normal distribution $\mathcal{N}(0,1)$. The lower and upper bounds $x^L$ and  $x^U$ are set to be $\textbf{0}$ and $\textbf{1}$, respectively. The number of variables and constraints for instances in this dataset are set to 30 and 5, respectively. We generate a small dataset for general QCQPs because larger-scale instances with more constraints tend to be predominately infeasible. 

\paragraph{CLS:} Constrained least squares problems with quadratic constraints are common in various applications. We examine the following CLS problem:
\begin{equation}
    \begin{aligned}
    \min_{x\in\mathbb{R}^n}~~ &\|Ax-b\|^2 \\
    \text{s.t.} ~~&x^\top Q x \leq c
    \end{aligned}
\end{equation}
The elements in $A$, $b$ and $Q$ are sampled from the standard normal distribution $\mathcal{N}(0,1)$, while elements in $c$ are sampled from $\mathcal{N}(1,1)$.  For this dataset, the number of variables is set to 500, and the sparsity (the proportion of zero elements) of $Q$ is set to 0.95.

\paragraph{TRS:} The trust-region subproblem is another form of convex QCQP, defined by the following formulation:
\begin{equation}
    \begin{aligned}
        \min_{x\in \mathbb{R}^n} ~~&x^\top Qx + 2c^\top x \\
        \text{s.t.} ~~&\|x\|^2\leq \Delta^2
    \end{aligned}
\end{equation}
The coefficients in $Q$ and $c$ are independently sampled from the standard normal distribution $\mathcal{N}(0,1)$ and $\Delta$ is sampled from $\mathcal{N}(1,1)$. Like the CLS dataset, the number of variables is set to 500, and the sparsity of $Q$ is set to 0.95.

To ensure that the generated instances remain convex, we adjust all sampled matrices $Q$ and $Q_i$s corresponding to the quadratic terms in both the objective function and the constraints. Specifically, each matrix is modified by replacing it with $Q - \alpha I$, where $\alpha <0$ is the minimal eigenvalue of $Q$. This adjustment ensures that the matrices are positive semi-definite, thus guaranteeing the convexity of the corresponding QCQP instances.

For each dataset, we generate 1,000 instances for training and 300 instances for validation. All instances are solved using the IPOPT solver~\citep{wachter2006implementation}. The resulting feasibility, objective values, and optimal solutions are collected as labels.

\subsection{GNN architecture and training settings} 
For the GNN described in Section \ref{sec:theory_pre}, there are three classes of functions $\{g_1^t,\dots,g_4^t\}_{t=1}^T$, $\{h_1^t,\dots,h_6^t\}_{t=1}^T$ and $R$ remain unspecified. 
The first class, $\{g_1^t,\dots,g_4^t\}_{t=1}^T$, are two-layer MLPs with layer widths of $[d,d]$, and ReLU as activations, where the inputs of each function are concatenated together. The second class, $\{h_1^t,\dots,h_6^t\}_{t=1}^T$, are linear transformations with output dimension $d$ followed by ReLU activations. The last one, $R$, is also a two-layer MLP with ReLU activation, with widths of $[d,1]$ for predicting feasibility and objective values, and $[d,n]$ for predicting solutions. The hyper-parameter $T$ is set to 2.
For training, we utilized the Adam optimizer with a learning rate of 0.0001 and a batch size of 16.

\subsection{Main results}
\label{sec:main-results}

In this section, we present the main results for tasks with different learning targets. For the GP dataset, we perform tasks to predict the feasibility, objective value, and optimal solution. However, the feasibility task is omitted for the CLS and TRS datasets, as their sampled instances are predominantly feasible.

\paragraph{Training loss vs. numbers of parameters.}

The results from Table \ref{table:tr_loss_vs_model_size} show the training loss for models with different numbers of parameters. Generally, the losses are small across all models, validating the claim in Theorem \ref{eq:main_theorem}. Moreover, we observe a consistent trend: as the number of parameters increases, the training loss decreases.

\begin{table*}[h]
\centering
\caption{Training loss vs. numbers of parameters.}
\scalebox{1}{
\begin{tabular}{cccccccc}
\toprule
\multirow{2}{*}{Dataset} & \multirow{2}{*}{Target} & \multicolumn{6}{c}{\# Parameters}          \\ \cmidrule{3-8} 
                         &                         & 15K    & 21K    & 42K    & 126K   & 1.7M   & {6.7M}     \\
                         \midrule
\multirow{3}{*}{GP}      & feasibility             & 0.2114 & 0.0992 & 0.0883 & 0.0779 & 0.0254   & {0.0044} \\
                         & objective               & 0.0853 & 0.0293 & 0.0218 & 0.0185 & 0.0110   & {0.0034} \\
                         & solution                & 0.0604 & 0.0419 & 0.0413 & 0.0407 & 0.0397   &{0.0350}  \\
                         \midrule
\multirow{2}{*}{CLS}     & objective               & 0.0994 & 0.0196 & 0.0135 & 0.0096 & 0.0020   & {0.0016} \\
                         & solution                & 0.2333 & 0.1454 & 0.0175 & 0.0173 & 0.0171   & {0.0169} \\
                         \midrule
\multirow{2}{*}{TRS}     & objective               & 2.1098 & 0.1979 & 0.1089 & 0.0996 & 0.0265   & {0.0171} \\
                         & solution                & 1.2004 & 0.4294 & 0.4035 & 0.4022 & 0.4017   & {0.4010}\\
                         \bottomrule
\end{tabular}
}
\label{table:tr_loss_vs_model_size}
\end{table*}

For the GP dataset, the feasibility, objective, and solution losses all decrease as the number of parameters increases, with the loss for the largest model (6.7M parameters) being particularly small, especially for the objective and feasibility targets. For the CLS dataset, similar trends are observed, with the loss for the objective function decreasing substantially from 0.0994 (15K parameters) to 0.0016 (6.7M parameters). The solution loss in CLS remains relatively stable but still decreases as the number of parameters increases. Finally, in the TRS dataset, both the objective and solution losses follow a similar pattern, with the objective loss improving significantly from 2.1098 (15K parameters) to 0.0171 (6.7M parameters).

\paragraph{Validation loss vs. number of samples.}

The validation loss results presented in Table \ref{table:val_loss_vs_nsample} indicate that the validation loss remains small across all configurations, highlighting the generalization capability of the models. Additionally, as the number of training samples increases, the validation loss decreases, reflecting the benefit of having more data for model training.

\begin{table*}[h]
\centering
\caption{Validation loss vs. numbers of training samples.}
\scalebox{1}{
\begin{tabular}{ccccccc}
\toprule
\multirow{2}{*}{Dataset} & \multirow{2}{*}{Target} & \multicolumn{5}{c}{\# Samples}             \\ \cmidrule{3-7} 
                         &                         & 100    & 300    & 500    & 700    & 1,000   \\
                         \midrule
\multirow{3}{*}{GP}      & feasibility             & 0.5857 & 0.3167 & 0.1940 & 0.1885 & 0.1004 \\
                         & objective         & 0.1067 & 0.0577 & 0.0514 & 0.0421 & 0.0296 \\
                         & solution        & 0.1149 & 0.0439 & 0.0438 & 0.0434 & 0.0427 \\
                         \midrule
\multirow{2}{*}{CLS}     & objective         & 0.0136 & 0.0121 & 0.0107 & 0.0086 & 0.0049 \\
                         & solution        & 0.0181 & 0.0179 & 0.0178 & 0.0177 & 0.0177 \\
                         \midrule
\multirow{2}{*}{TRS}     & objective         & 0.2131 & 0.0933 & 0.0834 & 0.0792 & 0.0596 \\
                         & solution        & 0.4081 & 0.4055 & 0.4054 & 0.4054 & 0.4054\\
\bottomrule
\end{tabular}}
\label{table:val_loss_vs_nsample}
\end{table*}

For the GP dataset, the validation loss for feasibility, objective, and solution all decrease as the number of samples increases. Notably, the solution loss stabilizes after 500 samples, while the objective and feasibility losses continue to improve as the number of samples grows, with the best performance observed at 1,000 samples. The CLS dataset shows a clear reduction in the objective loss as the number of samples increases, with the loss dropping from 0.0136 for 100 samples to 0.0049 for 1,000 samples. The solution loss in CLS remains relatively stable across different sample sizes. Finally, for the TRS dataset, the objective loss improves from 0.2131 at 100 samples to 0.0596 at 1,000 samples, while the solution loss remains nearly constant across all sample sizes.

In conclusion, both the training and validation loss results highlight the effectiveness of the models, showing that increasing the number of parameters and training samples leads to improved performance in terms of both training and generalization.

\subsection{Additional comparative results}

\paragraph{Runtime comparison.}

Table~\ref{table:runtime} reports the average solving time (in seconds) and standard deviation across the validation instances for several QCQP solvers—{IPOPT}~\citep{wachter2006implementation}, {Gurobi}~\citep{gurobi}, MOSEK~\citep{mosek}, SCS~\citep{ocpb:16}, and {ECOS}~\citep{domahidi2013ecos}—together with the inference time of our GNN. The results demonstrate that our GNN-based approach delivers a substantial speedup, often by orders of magnitude, compared to all baseline solvers. { However, it is important to note that the accuracy of these baseline solvers ($\leq 10^{-6}$) significantly surpasses that of the GNN, as reported in Section \ref{sec:main-results} (with a minimal level of $10^{-3}$), indicating a trade-off between speed and precision.}

\begin{table}[h]
\centering
\caption{Comparison of runtime (in seconds) with solvers having a tolerance of $10^{-6}$.}
\scalebox{1}{

\begin{tabular}{lccc}
\toprule
Method & GP                & GLS               & TRS               \\
\midrule
IPOPT   & 0.0374( ± 0.0044) & 0.0602( ± 0.0115) & 0.0670( ± 0.0158) \\
GUROBI  & 0.0054( ± 0.0004) & 0.1091( ± 0.0138) & 1.8921( ± 0.2751) \\
MOSEK   & 0.0300( ± 0.0033) & 0.4014( ± 0.0274) & 0.2264( ± 0.0079) \\
SCS     & 0.0311( ± 0.0058) & 0.6799( ± 1.0300) & 0.1466( ± 0.8167) \\
ECOS    & 0.0314( ± 0.0040) & 4.4056( ± 0.4668) & 1.6713( ± 0.5785) \\
GNN     & 0.0024( ± 0.0032) & 0.0022( ± 0.0030) & 0.0023( ± 0.0031) \\
\bottomrule
\end{tabular}
}
\label{table:runtime}
\end{table}

{{

\paragraph{Impact of sparsity.}

We conducted an ablation study on datasets CLS and TRS by varying sparsity (fraction of zero coefficients) from 0.95 to 0.8. Table~\ref{table:sparsity} reports the training losses.
\begin{table}[h]
\centering
\caption{Training loss on different levels of sparsity. }

\begin{tabular}{llcccc}
\toprule
\multirow{2}{*}{Dataset} & \multirow{2}{*}{Target} & \multicolumn{4}{c}{Sparsity}      \\ \cmidrule{3-6} 
                         &                         & 0.95   & 0.9    & 0.85   & 0.8    \\
                         \midrule
\multirow{2}{*}{CLS}     & objective               & 0.0020 & 0.0036 & 0.0058 & 0.0096 \\
                         & solution                & 0.0171 & 0.0217 & 0.0279 & 0.0323 \\
                         \cmidrule{2-6}
\multirow{2}{*}{TRS}     & objective               & 0.0265 & 0.0497 & 0.1084 & 0.1844 \\
                         & solution                & 0.4017 & 0.5338 & 0.7415 & 0.9042\\
\bottomrule
\end{tabular}
\label{table:sparsity}
\end{table}
As sparsity decreases, losses increase consistently, reflecting the added difficulty of denser problems with more complex interactions. Our focus on highly sparse cases (e.g., 0.95) is motivated by their prevalence in practice—for example, over 35\% of QPLib \citep{furini2019qplib} instances exhibit sparsity above 0.95.

In addition to comparisons on runtime and sparsity, we also provide evaluations on a real-world dataset, QPLIB, and on a task of predicting boundedness, which can be found in Appendix \ref{appendix:add_res}.
}}

\subsection{{Discussions}} \label{sec:discussion_results}

{
The computational efficiency of our tripartite GNN stems from two key design choices. First, the graph structure naturally exploits problem sparsity, as edges correspond directly to non-zero parameters, ensuring high efficiency for real-world, sparse instances. Second, the architecture uses simple message-passing layers where the split-layer design organizes computations without increasing the total number of matrix operations, incurring minimal overhead. We remark that our universal approximation theorem establishes expressive power but does not quantify the parameters required for a given approximation error; deriving such bounds remains an important direction for future work.}

\section{Conclusions}
This paper introduces a new tripartite graph representation specifically designed for QCQPs. By leveraging the capabilities of MP-GNNs, this approach shows theoretical promise in predicting key properties of QCQPs with arbitrary desired accuracy, including feasibility, boundedness, optimal values, and solutions. Initial numerical experiments validate the effectiveness of our framework.
This work advances learning-to-optimize by extending GNNs to QCQP problems, which have been challenging for traditional graph-based L2O methods. 
Our findings may inspire future research into more specialized GNN architectures and principled approaches to controlling GNN sizes for practical QCQP applications, going beyond the basic GCN structure used here.

\section*{Acknowledgments}
This work was supported by the National Key R\&D Program of China under grant~2022YFA1003900.
Qian Chen and Akang Wang also acknowledge support from National Natural Science Foundation of
China (Grant No. 12301416), Guangdong Basic and Applied Basic Research Foundation (Grant
No. 2024A1515010306), and Shenzhen Science and Technology Program (Grant No.~JCYJ20250604191330040). 

\bibliography{main}
\bibliographystyle{tmlr}

\newpage
\appendix
\onecolumn
\section{Detailed proof of main theorem}\label{sec:detailed_proof}

\subsection{Sketch of the proof}

We provide a brief outline of this proof:
\begin{itemize}
    \item[(i)] \textbf{Separation Power of WL-Test:} We first establish that the WL-test has sufficient separation power on the defined target functions.
    \item[(ii)] \textbf{Connection to tripartite MP-GNNs:} We then demonstrate the relationship between the separation power of tripartite MP-GNNs and that of the Tripartite WL-tests, showing that the GNNs can separate our target functions. This result, combined with the generalized Weierstrass theorem, leads to our approximation power conclusions.
    \item[(iii)] \textbf{Universal Approximation:} Assuming the target functions are continuous and have compact support, we prove universal approximation. In this step, we also specify the problem size and apply the Generalized Weierstrass Theorem (Theorem 22 of ~\cite{azizian2020expressive}).
    \item[(iv)] \textbf{Addressing Discontinuities:} Since the target functions are neither continuous nor compactly supported, particularly at the boundary of the convex QCQPs universe $\mathcal{G}^{m,n}_{\mathrm{QCQP}}$, we construct a continuous approximation of the target function to apply universal approximation, ensuring convergence in measure.
\end{itemize}

\subsection{WL-test on tripartite graph representation}
Here we describe our Tripartite WL-test, which is the WL-test counterpart of the tripartite MP-GNNs:

\begin{itemize}
    \item \textbf{Embedding.} Initial colors $C^{0,u}$, $C^{0,v}$, and $C^{0,c}$ are assigned based on their corresponding features and node types (e.g., from $V_1$, $V_2$, or $V_3$):
        \begin{itemize}
            \item $C^{0,u} \gets \mathrm{HASH}_1(f(u))$ for $u\in V_1$,
            \item $C^{0,v}  \gets \mathrm{HASH}_2(f(v))$ for $v\in V_2$,
            \item $C^{0,c} \gets \mathrm{HASH}_3(f(c))$ for $c\in V_3$.
        \end{itemize}
        Here, we refer to the color of a node after the $t$-th message-passing layer as $C^{t,\cdot}$.
    
    \item \textbf{Update quadratic nodes via variable nodes} ($V_1 \to V_2$):
        \[
            \bar C^{t,v} \gets \mathrm{HASH}\left(C^{t,v},\sum_{u \in V_1} w_{u,v} \mathrm{HASH}(C^{t,u})\right), \forall v \in V_2
        \]
    
    \item \textbf{Update constraint nodes via variable and quadratic nodes} ($V_1, V_2 \to V_3$):
        \[
            C^{{t+1},c} \gets \mathrm{HASH}\left(C^{t,c},\sum_{u \in V_1} w_{u,c} \mathrm{HASH}(C^{t,u}),\sum_{v \in V_2} w_{v,c} \mathrm{HASH}(\bar C^{t,v})\right), \forall c \in V_3
        \]
    
    \item \textbf{Update quadratic nodes again via constraint nodes} ($V_3 \to V_2$):
        \[
            C^{{t+1},v} \gets \mathrm{HASH}\left(\bar C^{t,v},\sum_{c \in V_3} w_{c,v} \mathrm{HASH}(C^{{t+1},c})\right), \forall v \in V_2
        \]
    
    \item \textbf{Update variable nodes via constraint and quadratic nodes} ($V_3, V_2 \to V_1$):
        \[
            C^{{t+1},u} \gets \mathrm{HASH}\left(C^{t,u},\sum_{c \in V_3} w_{c,u} \mathrm{HASH}(C^{{t+1},c}), \sum_{v \in V_2} w_{v,u} \mathrm{HASH}(C^{{t+1},v})\right), \forall u \in V_1
        \]
    
    \item \textbf{Termination and Readout.} Once a termination condition is met, we return the color collection \((C^{T,u})_{u \in V_1}, (C^{T,v})_{v \in V_2}, (C^{T,c})_{c \in V_3}\).\footnote{Multiple occurrences of members are counted instead of rejected.}
    
    \item All hash functions are real-valued and assumed to be collision-free.
\end{itemize}

In this paper, we terminate the Tripartite WL-test only when the algorithm stabilizes\footnote{For simplicity, we exclude the final iteration showing that the algorithm has stabilized and return the last iteration in which stabilization occurred.}, i.e., when the number of distinct colors no longer changes in an iteration (after all four color updates). Despite not imposing a forced iteration limit, the WL-test is guaranteed to terminate in a finite number of iterations, denoted by $T$:

\begin{proposition}[Tripartite WL-test terminates in finite iterations]\label{prop:wl_finite_iterations}
    The Tripartite WL-test stabilizes in a finite number of iterations.
\end{proposition}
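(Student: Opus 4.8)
The plan is to follow the classical colour-refinement argument: I will track the partition of the node set induced by the colours and show it can only become finer, so that finiteness of the graph forces the process to halt. For each iteration $t$, let $P_t$ be the partition of $V=V_1\cup V_2\cup V_3$ whose blocks group together nodes sharing the same colour after the four updates of the $t$-th layer (so two nodes lie in one block iff $C^{t,\cdot}$ agrees on them). I would then establish three claims in order: (i) \emph{monotone refinement}, that $P_{t+1}$ refines $P_t$; (ii) \emph{boundedness}, that $|P_t|\le|V|<\infty$; and (iii) that a non-decreasing integer sequence bounded above is eventually constant, which upgrades a stabilisation of the colour count to a genuine stabilisation of the colouring.

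For (i), the crucial structural feature is that every colour update applies a collision-free (hence injective) $\mathrm{HASH}$ to a tuple whose \emph{first coordinate is the node's own colour from the previous iteration}. Indeed, $C^{t+1,u}=\mathrm{HASH}(C^{t,u},\dots)$ for $u\in V_1$ and $C^{t+1,c}=\mathrm{HASH}(C^{t,c},\dots)$ for $c\in V_3$; for $v\in V_2$ the colour is produced by the composition $C^{t,v}\mapsto\bar C^{t,v}=\mathrm{HASH}(C^{t,v},\dots)\mapsto C^{t+1,v}=\mathrm{HASH}(\bar C^{t,v},\dots)$ of two such injective maps. Consequently, for any two same-type nodes $w,w'$, $C^{t,w}\neq C^{t,w'}$ implies $C^{t+1,w}\neq C^{t+1,w'}$: nodes already separated at step $t$ stay separated at step $t+1$. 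This is exactly the statement that $P_{t+1}$ refines $P_t$, whence $|P_{t+1}|\ge|P_t|$.

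Claim (ii) is immediate, since $|V|=n+|\mathcal{L}|+m$ is finite and each $|P_t|$ is an integer in $[1,|V|]$. For (iii), any non-decreasing integer sequence bounded above by $|V|$ is eventually constant: there is a finite index $T$ with $|P_t|=|P_T|$ for all $t\ge T$. For every such $t$, $P_{t+1}$ refines $P_t$ while having the same number of blocks, and a refinement with an equal block count must be an equality; hence $P_{t+1}=P_t$ for all $t\ge T$. Thus the colouring reaches a fixed partition after finitely many iterations, which is precisely the assertion of the proposition (and shows the termination condition ``the number of distinct colours no longer changes'' is eventually met).

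The point demanding the most care — and the main obstacle — is making the \emph{stated} stopping rule, which halts at the first iteration where the number of distinct colours is unchanged, coincide with true stabilisation, as well as sharpening the halting time to $T\le|V|$. For a generic non-decreasing integer sequence a one-step plateau need not be permanent (a pattern $k,k,k+1$ is a priori possible), and ruling this out requires that the one-iteration update, viewed as a map on partitions, be determined by the current partition alone. This is where the weighted-sum aggregation $\sum_w w\,\mathrm{HASH}(\cdot)$ must be scrutinised: unlike a raw multiset, a weighted sum of hashed neighbour colours could in principle collide across distinct configurations. I would dispatch this by the standard fact that for collision-free, generically chosen real-valued hashes the sum aggregation injectively encodes the relevant weighted multiset of neighbour colours, so that $P_{t+1}=P_t$ propagates to $P_{t'}=P_t$ for all $t'\ge t$; with this partition-determinism in hand the count strictly increases until it first plateaus, giving $T\le|V|$ and confirming that a single unchanged count certifies the fixed partition obtained above. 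I would emphasise, however, that even without this refinement the finiteness claim itself already follows from steps (i)--(iii).
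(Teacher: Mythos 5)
Your proof is correct and takes essentially the same route as the paper's: its two-sentence argument is precisely your claims (i)--(iii), namely that the collision-free hash applied to a tuple whose first coordinate is the node's own previous color makes separated nodes stay separated, so the color partition monotonically refines and its block count is bounded by $|V|$. Your extra care about why a one-step plateau in the color count certifies permanent stabilization --- which requires the weighted aggregation $\sum_{u} w_{u,v}\,\mathrm{HASH}(C^{t,u})$ to injectively encode the per-color weight sums, i.e.\ more than bare collision-freeness of the hash --- is a point the paper silently assumes both here and later in Lemma~\ref{lem:stable_color_sum}, so it is a welcome sharpening rather than a deviation.
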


\begin{proof}
    It is straightforward to observe from the formulation that if two nodes have different colors, they will continue to have different colors after an (sub-)iteration. Therefore, the number of iterations required for stabilization is capped by the number of distinct nodes, which is finite.
\end{proof}

We say that the Tripartite WL-test \textit{separates} two graphs if the resulting collection of colors differs between the two graphs. We claim that the Tripartite WL-test has the same separation power as its network counterpart, specifically the tripartite MP-GNNs:

\begin{proposition}[Tripartite MP-GNNs have equal separation power as the Tripartite WL-Test]\label{prop:message_passing_GNN_has_equal_separation_power}
    Given two instances $\mathcal{I}$ and $\Bar{\mathcal{I}}$ (correspondingly encoded by graphs $G$ and $\Bar{G}$), the following holds:
    \begin{itemize}
        \item[(i)] For graph-level output cases, the two instances are separated by $\mathcal{F}_{\mathrm{QCQP}}^{m,n}(\mathbb{R})$, i.e., 
        \begin{equation*}
            F(G) = F(\Bar{G}), \forall F \in \mathcal{F}_{\mathrm{QCQP}}^{m,n}(\mathbb{R})
        \end{equation*}        
        if and only if the two instances are also separated by the Tripartite WL-test.
        
        \item[(ii)] For node-level output cases, i.e., $\mathbb{R}^s = \mathbb{R}^n$, the two instances are separated by $\mathcal{F}_{\mathrm{QCQP}}^{m,n}(\mathbb{R})$, i.e.,
        \begin{equation*}
            F(G) = F(\Bar{G}), \forall F \in \mathcal{F}_{\mathrm{QCQP}}^{m,n}(\mathbb{R}^n)
        \end{equation*}        
        if and only if the two instances are separated by the Tripartite WL-test, and additionally, the variables are correspondingly indexed. Specifically, $C^{T,u_j} = C^{T,\Bar{u}_j}$ must hold for all $j \in [n]$.
    \end{itemize}
\end{proposition}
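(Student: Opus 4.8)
The plan is to prove Proposition~\ref{prop:message_passing_GNN_has_equal_separation_power} by establishing the equivalence in two directions, exploiting the structural parallelism between the tripartite message-passing GNN and the Tripartite WL-test. The two algorithms are deliberately designed to share the same update scheme (embedding, four sub-layer updates, readout), so the core idea is an induction on the number of message-passing layers $t$ showing that at every layer the GNN features are \emph{at least as refined as} the WL colors, and conversely that the WL colors are a complete invariant for what any GNN can compute. Throughout I would use the standard fact that a continuous function can be chosen to be collision-free (injective) on a finite set of inputs, which is what lets continuous GNN functions mimic the injective HASH functions, and vice versa that multiset-summation with real-valued collision-free hashes can encode and be decoded by continuous maps.

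First I would prove the ``only if'' direction (WL-indistinguishability $\Rightarrow$ GNN-indistinguishability), which is the routine half. The plan is to show by induction on $t$ that if $C^{t,\cdot}=\bar C^{t,\cdot}$ as multisets (with the corresponding-indexing refinement in the node-level case), then one can choose the learnable functions $g^t_l, f^t_l$ so that the GNN hidden features $h^{t,\cdot}$ coincide on $G$ and $\bar G$. The base case is the embedding layer, where equality of node features gives equal initial embeddings. For the inductive step, each GNN sub-layer aggregates $\sum w\, f(h)$ over a neighborhood; since the WL colors (hence the features, by induction) match and the edge weights $w_{u,v},w_{u,c},w_{v,c}$ are identical, the aggregated messages match, and applying the continuous update functions preserves equality. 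The readout then produces the same output because the graph-level readout is a function of the three feature sums and the node-level readout is a function of per-node features plus sums, both invariant under the matched multisets. This shows $F(G)=F(\bar G)$ for all $F$.

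The harder ``if'' direction (GNN-indistinguishability $\Rightarrow$ WL-indistinguishability) is where I expect the main obstacle. Here I would argue the contrapositive: if the WL-test \emph{separates} the two graphs, I must explicitly construct a GNN $F$ with $F(G)\neq F(\bar G)$. The plan is to show the GNN can \emph{simulate} the WL-test by choosing each continuous update function to be injective on the finite set of multisets actually appearing across both graphs (possible since a continuous injection on a finite point set always exists, e.g.\ by interpolation), so that after $T$ layers the GNN features injectively encode the WL colors. The delicate point is the multiset-aggregation step: I must ensure the weighted sums $\sum_u w_{u,v} f(h^{t,u})$ remain injective as functions of the underlying color-multisets, which requires choosing $f$ to map colors to a sufficiently ``independent'' set of real vectors (a standard argument, e.g.\ mapping to powers or to generic points so the weighted sum determines the multiset). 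For the node-level case I additionally need the readout to recover per-variable information, and this is exactly why the extra hypothesis $C^{T,u_j}=C^{T,\bar u_j}$ for all $j$ is required — without fixed indexing the symmetric readout cannot distinguish permuted solutions, so separation at the node level demands both multiset separation and matched per-node colors.

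The main obstacle, therefore, is the injective-encoding construction in the second direction: guaranteeing that finitely many continuous functions can be chosen \emph{simultaneously} across all $T$ layers so that the composition faithfully reproduces the WL color refinement, while respecting that the same function is reused at every node of a given type. I would handle this by noting that the union over both graphs of all intermediate multisets is finite, invoking the existence of collision-free continuous maps on finite sets at each layer, and composing them; the weighted-sum injectivity is secured by a genericity/linear-independence argument on the finitely many weight patterns. Combined with Proposition~\ref{prop:wl_finite_iterations}, which guarantees $T$ is finite, this yields a concrete $F\in\mathcal{F}^{m,n}_{\mathrm{QCQP}}(\mathbb{R})$ (resp.\ $\mathbb{R}^n$) that separates the instances, completing the equivalence.
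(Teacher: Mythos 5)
Your high-level plan matches the paper's proof: one direction by simulating the Tripartite WL-test with a GNN whose update functions are injective on the finitely many realized values (the paper does this with one-hot color encodings restored after each aggregation by a memorizing MLP, citing a result of Yun et al., whereas you propose generic injective continuous maps --- an acceptable variant), and the other direction by showing WL colors refine everything a GNN can compute, with the correct observation that the node-level case needs the extra hypothesis $C^{T,u_j}=C^{T,\bar u_j}$.

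However, your inductive step for the direction ``WL-indistinguishable $\Rightarrow$ GNN-indistinguishable'' contains a genuine gap. You argue that ``since the WL colors (hence the features, by induction) match and the edge weights $w_{u,v},w_{u,c},w_{v,c}$ are identical, the aggregated messages match.'' The edge weights are \emph{not} identical: $G$ and $\bar G$ are different instances, and WL-equivalence provides no weight-preserving node correspondence. Concretely, suppose $v\in V_2(G)$ has two $V_1$-neighbors of color $J$ with weights $1$ and $3$, while $\bar v\in V_2(\bar G)$ has two neighbors of color $J$ with weights $2$ and $2$: the WL hash inputs $\sum_u w_{u,v}\mathrm{HASH}(C^{t,u})$ coincide, so WL keeps $v,\bar v$ the same color, yet no edge-by-edge matching of weights exists. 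The aggregations $\sum_u w_{u,v}f^t_1(h^{t,u})$ nevertheless agree for \emph{every} $f^t_1$, but only because the per-color-class weighted sums agree --- and that is exactly what must be extracted: equality of colors at step $t+1$, together with collision-freeness of the real-valued hash, yields $\sum_{u:\,C^{t,u}=J} w_{u,v}=\sum_{u:\,C^{t,u}=J} w_{u,\bar v}$ for every color $J$, after which the aggregations agree because (by induction) each level set of $h^{t,\cdot}$ is a union of color classes. This is how the paper's proof proceeds. Two related formulation problems follow from the same confusion: your induction hypothesis (equality of color \emph{multisets} between the two graphs) is too weak --- the paper's induction is the pointwise statement $C^{t,u}=C^{t,u'}\Rightarrow h^{t,u}=h^{t,u'}$ for all node pairs ranging over \emph{both} graphs --- and your phrase ``one can choose the learnable functions $g^t_l,f^t_l$ so that the hidden features coincide'' has the quantifier backwards: GNN-indistinguishability requires the conclusion for \emph{every} choice of the learnable functions, not for some choice. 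Your preamble's remark that collision-free hashes can be ``decoded'' gestures at the right mechanism, but the decoding must be performed at the level of color-class weight sums, not individual edges, for the induction to close.
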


For the detailed proof of this proposition, see Appendix~\ref{sec:message_passing_GNN_has_equal_separation_power_proof}.

\subsection{Proof of main theorem}

Now we can prove the main theorem. First, we the following proposition.

\begin{proposition}\label{cor:target_function_passing}
    Let $\mathcal{I},\Bar{\mathcal{I}}$ (encoded by $G,\Bar{G}\in\mathcal{G}^{m,n}_{\mathrm{QCQP}}$) be two QCQP instances. If the tripartite WL-test fails to separate the two instances, then the following holds:
    \begin{itemize}
        \item[(i)] If one is feasible, the other is also feasible, i.e., \(\Phi_{\mathrm{feas}}(G) = \Phi_{\mathrm{feas}}(\Bar{G})\).
        \item[(ii)] Assume both instances are feasible. If one is unbounded, the other is also unbounded.
        \item[(iii)] Assume both instances are bounded. Then they have equal optimal values, i.e., \(\Phi_{\mathrm{obj}}(G) = \Phi_{\mathrm{obj}}(\Bar{G})\).
        \item[(iv)] Assume both instances are bounded and that the variables and constraints are indexed such that \(C^{T,u_j} = C^{T,\Bar{u}_j}\). Then they have the same optimal solution, with the least \(\ell_2\)-norm, i.e., \(\Phi_{\mathrm{sol}}(G) = \Phi_{\mathrm{sol}}(\Bar{G})\).
    \end{itemize}
\end{proposition}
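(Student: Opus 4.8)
The engine of the whole proof is Lemma~\ref{lemma:solution_passing}, together with the observation that ``the Tripartite WL-test fails to separate $\mathcal{I}$ and $\bar{\mathcal{I}}$'' is a \emph{symmetric} relation in the two instances. Hence I may apply the lemma in either direction, transporting any feasible point of one instance to a feasible point of the other whose objective and $\ell_2$-norm are no larger. The plan is to let this symmetric ``solution transport'' drive each of the four claims, invoking Proposition~\ref{prop:message_passing_GNN_has_equal_separation_power} only at the very end to pin down the coordinate indexing needed for the optimal-solution statement.

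For the first three claims I would argue as follows. \textbf{(Feasibility.)} If $\mathcal{I}$ is feasible, pick a feasible $x$; Lemma~\ref{lemma:solution_passing} yields a feasible $\bar{x}$ for $\bar{\mathcal{I}}$, so $\bar{\mathcal{I}}$ is feasible, and the symmetric application gives the converse, hence $\Phi_{\mathrm{feas}}(G)=\Phi_{\mathrm{feas}}(\bar{G})$. \textbf{(Boundedness.)} Assuming both feasible, if $\mathcal{I}$ is unbounded choose feasible $x_k$ with objective tending to $-\infty$; transporting each $x_k$ produces feasible $\bar{x}_k$ for $\bar{\mathcal{I}}$ with objective bounded above by that of $x_k$, so $\bar{\mathcal{I}}$ is unbounded too, and symmetry closes the argument. \textbf{(Optimal value.)} With both bounded, transport an optimizer $x^*$ of $\mathcal{I}$ to a feasible $\bar{x}$ with objective at most $\Phi_{\mathrm{obj}}(G)$, giving $\Phi_{\mathrm{obj}}(\bar{G})\le\Phi_{\mathrm{obj}}(G)$; the reverse inequality follows by symmetry, so the optimal values coincide.

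For the optimal-solution claim I would start from the unique least-$\ell_2$-norm optimizer $x^*$ of $\mathcal{I}$ and transport it to some feasible $\bar{x}$ for $\bar{\mathcal{I}}$ with objective at most the objective of $x^*$ and $\|\bar{x}\|\le\|x^*\|$. Since the optimal values are equal (claim~3), the objective of $\bar{x}$ is at most $\Phi_{\mathrm{obj}}(\bar{G})$, so $\bar{x}$ is in fact optimal for $\bar{\mathcal{I}}$. Letting $\bar{x}^*$ be the least-norm optimizer of $\bar{\mathcal{I}}$, we get $\|\bar{x}^*\|\le\|\bar{x}\|\le\|x^*\|$; running the symmetric transport from $\bar{x}^*$ gives $\|x^*\|\le\|\bar{x}^*\|$. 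Thus all these norms are equal, $\bar{x}$ is itself a least-norm optimizer, and by uniqueness of the least-norm optimizer for a convex QCQP we conclude $\bar{x}=\bar{x}^*$.

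The main obstacle is upgrading this last conclusion to the \emph{coordinate-wise} equality $\Phi_{\mathrm{sol}}(G)=\Phi_{\mathrm{sol}}(\bar{G})$ required by the node-level readout. The lemma only furnishes the existence of a norm- and objective-dominated transport, so I must track how its construction acts on coordinates and align it with the indexing hypothesis $C^{T,u_j}=C^{T,\bar{u}_j}$ supplied by Proposition~\ref{prop:message_passing_GNN_has_equal_separation_power}(2). When the variable nodes carry pairwise distinct WL colors the color correspondence identifies coordinates bijectively, the transport acts as the identity on coordinates, and $\bar{x}^*_j=x^*_j$ follows directly. The delicate case is when several variable nodes share a color: there I expect that WL-indistinguishability of these coordinates, combined with uniqueness of the least-norm optimizer, forces $x^*$ to be constant across each color class, so that the transport is well defined and norm-preserving coordinate by coordinate. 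Making this symmetry-plus-uniqueness argument airtight, and verifying that the lemma's transport indeed respects the color classes, is the crux of the proof.
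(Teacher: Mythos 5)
Your arguments for claims 1--3 coincide with the paper's proof: transport a feasible point (resp.\ a minimizing sequence, resp.\ an optimizer) through Lemma~\ref{lemma:solution_passing}, and use the symmetry of WL-indistinguishability to get both directions. For claim 4 you also reproduce the paper's first half correctly: the transported point $\bar{x}$ is optimal for $\bar{\mathcal{I}}$, the norm chain $\|\bar{x}^*\|\le\|\bar{x}\|\le\|x^*\|$ plus the symmetric transport forces all norms to be equal, and uniqueness of the least-norm optimizer gives $\bar{x}=\bar{x}^*$. But the final step --- the coordinatewise equality $x^*=\bar{x}^*$ --- is exactly where you stop and say you ``expect'' the least-norm optimizer to be constant on color classes. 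That is a genuine gap: the black-box statement of Lemma~\ref{lemma:solution_passing} only asserts the \emph{existence} of a dominated feasible point, and nothing in it pins down how the transport acts on coordinates, so your argument as written proves $\bar{x}=\bar{x}^*$ but not $\bar{x}^*=x^*$.

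The paper closes precisely this hole by opening the lemma's proof (Appendix~\ref{sec:core_lemma_proof}), where the transport is the explicit color-class average $\bar{x}_j=\frac{1}{|G(J)|}\sum_{j'\colon C^{T,u_{j'}}=J}x_{j'}$, and then running a \emph{double transport}: from the least-norm optimizer $x$ of $\mathcal{I}$ construct $y$ for $\bar{\mathcal{I}}$, then transport $y$ back to $z$ for $\mathcal{I}$. The chains $f(z)\le\bar{f}(y)\le f(x)$ and $\|z\|\le\|y\|\le\|x\|$ show $z$ is an optimizer of $\mathcal{I}$ of no larger norm, so $z=x$ by uniqueness; on the other hand $z=y$, because $y$ is already constant on each color class (the indexing hypothesis $C^{T,u_j}=C^{T,\bar{u}_j}$ makes the classes in $G$ and $\bar{G}$ coincide), so averaging it a second time returns $y$ unchanged. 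Hence $x=y$: the least-norm optimizer is constant on WL color classes and is literally \emph{fixed} by the transport --- the exact statement your sketch needs. From $x=y$, your own norm and objective equalities plus uniqueness on the $\bar{\mathcal{I}}$ side give $y=\bar{x}^*$, whence $x^*=\bar{x}^*$. Two remarks to make your fallback airtight: the shortcut ``WL-indistinguishability plus uniqueness forces constancy on color classes'' is true but is itself proved by this averaging/idempotency argument (equivalently, apply the lemma with $\bar{\mathcal{I}}=\mathcal{I}$ and the identity indexing); and it depends essentially on convexity, since the averaged point is feasible and objective-dominating only via the convexity step in Appendix~\ref{sec:core_lemma_proof} --- for non-convex instances the analogous constancy claim fails, as the counterexamples of Section~\ref{sec:non_convex_counter_examples} show.
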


\begin{proof}
    \textbf{Passing feasibility}. Assume that \(\mathcal{I}\) is feasible, and let \(x\) be a feasible solution. By Lemma~\ref{lemma:solution_passing}, we obtain another solution \(\Bar{x}\) for instance \(\Bar{\mathcal{I}}\), which implies the feasibility of \(\Bar{\mathcal{I}}\). By switching the roles of \(\mathcal{I}\) and \(\Bar{\mathcal{I}}\), we prove the reverse claim.

    \textbf{Passing unboundedness}. Assume that \(\mathcal{I}\) is unbounded, i.e., for any \(M > 0\), there exists a solution \(x_M\) such that the objective \(f(x) \leq -M\). For each \(x_M\), we can construct a solution \(\Bar{x}_M\) for \(\Bar{\mathcal{I}}\) such that the objective \(\Bar{f}(\Bar{x}_M) \leq f(x_M) \leq -M\), implying that \(\Bar{\mathcal{I}}\) is also unbounded. Again, by switching the roles of \(\mathcal{I}\) and \(\Bar{\mathcal{I}}\), we prove the reverse claim.

    \textbf{Passing optimal values}. Assume that \(\mathcal{I}\) is feasible and bounded, and let \(x\) be its optimal solution. By Lemma~\ref{lemma:solution_passing}, we construct a solution \(\Bar{x}\) for \(\Bar{\mathcal{I}}\) such that:
    \begin{equation*}
        \Bar{f}(\Bar{x}) \leq f(x) = \Phi_{\mathrm{obj}}(G)    
    \end{equation*}
    implying that \(\Phi_{\mathrm{obj}}(\Bar{G}) \leq \Phi_{\mathrm{obj}}(G)\). Similarly, we can show that \(\Phi_{\mathrm{obj}}(G) \leq \Phi_{\mathrm{obj}}(\Bar{G})\), and thus \(\Phi_{\mathrm{obj}}(\Bar{G}) = \Phi_{\mathrm{obj}}(G)\).

    \textbf{Passing optimal solutions}. To prove the last claim, we need the construction of \(\Bar{x}\) from the detailed proof of Lemma~\ref{lemma:solution_passing} (see Appendix~\ref{sec:core_lemma_proof}). Assume that \(\mathcal{I}\) is feasible and bounded, and let \(x\) be its optimal solution (with the least \(\ell_2\)-norm). By Lemma~\ref{lemma:solution_passing}, we construct \(y\) for \(\Bar{\mathcal{I}}\) and \(z\) for \(\mathcal{I}\) by switching the roles of \(\mathcal{I}\) and \(\Bar{\mathcal{I}}\).

    We have \(f(z) \leq \Bar{f}(y) \leq f(x)\) and \(\|z\| \leq \|y\| \leq \|x\|\), which implies that \(z\) is not worse than the given optimal solution \(x\), and thus \(z = x\). By the construction of the averaged solution (and the assumption \(C^{T,u_j} = C^{T,\Bar{u}_j}\)), we have \(y = z\). Combining the two equalities, we conclude that \(x = y\).

    Let \(\Bar{x}\) be the optimal solution of \(\Bar{G}\), and we have \(\|\Bar{x}\| \leq \|y\| = \|x\|\). By switching the roles of \(\mathcal{I}\) and \(\Bar{\mathcal{I}}\), we obtain \(\|x\| \leq \|\Bar{x}\|\), and thus \(\|\Bar{x}\| = \|x\|\). Similarly, we have \(\Bar{f}(\Bar{x}) \leq \Bar{f}(y) \leq f(x)\), and by switching the roles, \(\Bar{f}(\Bar{x}) = f(x)\).

    Since \(\|y\| = \|x\| = \|\Bar{x}\|\) and \(\Bar{f}(y) = f(x) = f(\Bar{x})\), by uniqueness, we conclude that \(y = \Bar{x}\), proving the fourth claim.
\end{proof}

The next step is to extend this separation power to approximation power, which leads to our main theorem. We utilize the generalized Weierstrass-Stone theorem (Theorem 22 and Lemma 36 of~\cite{azizian2020expressive}) and Lusin's theorem.

By applying the generalized Weierstrass-Stone theorem, we establish the following proposition, which demonstrates the approximation power on equivariant functions with compact support.

\begin{proposition}[Uniform Approximation on Continuous Equivariant Functions with Compact Support]\label{prop:uniform_approximation_for_continuous_equivariant_compact_support}
    Let \(\Phi_c: \mathcal{G}^{m,n}_{\mathrm{c}} \to \mathbb{R}^s\) be a general continuous target function defined on a compact subset \(\mathcal{G}_c \subseteq \mathcal{G}^{m,n}_{\mathrm{QCQP}}\), such that:
    \begin{itemize}
        \item If \(s = 1\), the output remains unchanged if the input graph is re-indexed.
        \item If \(s = n\), the output re-indexes accordingly if the input graph is re-indexed.
    \end{itemize}
    
    If the following holds:
    \begin{equation}
        \left( F(G) = F(\Bar{G}), \forall F \in \mathcal{F}_{\mathrm{QCQP}}^{m,n}(\mathbb{R}^s) \Rightarrow \Phi(G) = \Phi(\Bar{G}) \right), \forall G, \Bar{G} \in \mathcal{G}^{m,n}_{\mathrm{c}}
    \end{equation}
    i.e., the family \(\mathcal{F}_{\mathrm{QCQP}}^{m,n}(\mathbb{R}^s)\) separates the target function \(\Phi\), then for any \(\delta > 0\), there exists a function \(F_\delta \in \mathcal{F}_{\mathrm{QCQP}}^{m,n}(\mathbb{R}^s)\) such that:
    \begin{equation}
        \|F_{\delta}(\mathcal{G}) - \Phi(G)\| < \delta 
    \end{equation}
\end{proposition}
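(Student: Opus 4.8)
The plan is to read the statement as a density claim and discharge it with the equivariant Stone--Weierstrass theorem of \cite{azizian2020expressive} (their Theorem 22, together with Lemma 36 for the equivariant bookkeeping). Concretely, I would regard $\mathcal{F}^{m,n}_{\mathrm{QCQP}}(\mathbb{R}^s)$ as a subset of the continuous maps $C(\mathcal{G}_c,\mathbb{R}^s)$ and show that $\Phi$ lies in its uniform closure; extracting $F_\delta$ with $\|F_\delta-\Phi\|<\delta$ is then immediate. The symmetry group in play is $S_n$ acting by simultaneously re-indexing the variable nodes, the incident quadratic nodes, and the constraint slots; invariance is the target symmetry when $s=1$ and equivariance when $s=n$, matching the two bullets in the hypothesis.

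First I would check that every $F\in\mathcal{F}^{m,n}_{\mathrm{QCQP}}(\mathbb{R}^s)$ is continuous on $\mathcal{G}_c$. Fixing the problem size $(m,n)$ and working with the full slate of potential quadratic nodes $v_{j,k}$ (a node carrying zero feature with zero-weight incident edges is inert and equivalent to its absence), the graph topology is fixed across $\mathcal{G}_c$, so each message-passing sub-layer is a finite composition of continuous embedding/update functions with weighted sums of the form $\sum_{u} w_{u,v} F(\cdot)$, all depending continuously on the coefficients in the topology of the footnote. Hence $F$ is continuous and $\mathcal{F}^{m,n}_{\mathrm{QCQP}}(\mathbb{R}^s)\subseteq C(\mathcal{G}_c,\mathbb{R}^s)$. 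Because $\mathcal{G}_c$ is compact, uniform approximation is the correct notion.

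Next I would verify the algebraic hypotheses of the theorem. Since the embedding, message, and readout functions range over \emph{all} continuous functions, the family is closed under the operations Stone--Weierstrass requires: given $F_1,F_2$, run them on disjoint hidden channels (concatenation) and let a single combining readout output $F_1+F_2$, $F_1F_2$, or a constant. This shows that for $s=1$ the family is a subalgebra of $C(\mathcal{G}_c)$ containing the constants, and for $s=n$ it is a module over that invariant subalgebra (the invariant scalars being produced by the graph-level readout and multiplied channelwise into the node-level output); permutation equivariance of each $F$ holds by construction of the message passing. With the family placed inside the framework, the separation hypothesis of the proposition, combined with Proposition~\ref{prop:message_passing_GNN_has_equal_separation_power} (GNNs separate exactly what the Tripartite WL-test separates), shows that $\Phi$ is constant on — respectively equivariant with respect to — the equivalence relation $G\sim\bar{G}\iff F(G)=F(\bar{G})\ \forall F$. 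This is precisely the condition under which Theorem 22 / Lemma 36 of \cite{azizian2020expressive} yields that $\Phi$ lies in the uniform closure of $\mathcal{F}^{m,n}_{\mathrm{QCQP}}(\mathbb{R}^s)$, giving the desired $F_\delta$.

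I expect the main obstacle to be the algebraic-structure step, especially for $s=n$: one must confirm that sums, products, and scalar (invariant) multiples can all be realized \emph{inside} the admissible GNN family rather than in some larger completion, and that the module structure over the invariant subalgebra is compatible with $S_n$-equivariance. A secondary technical point is the continuity/fixed-topology reduction above, which must be justified carefully so that the combinatorial ``which quadratic node exists'' choices do not break continuity on $\mathcal{G}_c$; handling graphs of genuinely different sparsity patterns within one compact set is the delicate part, and is what forces the full-slate-of-nodes device.
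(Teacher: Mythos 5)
Your proposal follows essentially the same route as the paper's proof: both verify that $\mathcal{F}^{m,n}_{\mathrm{QCQP}}(\mathbb{R}^s)$ is a subalgebra by running the two networks on concatenated hidden channels and combining them in the readout (yielding $\lambda F$, $F_1+F_2$, $F_1\times F_2$), then apply the generalized Stone--Weierstrass theorem (Theorem 22 of \cite{azizian2020expressive}), with your ``module over the invariant subalgebra'' condition for $s=n$ corresponding exactly to the paper's verification of the inclusion $\rho(\mathcal{F}_{\mathrm{scal}})\subseteq\rho(\pi_{\Sigma}\circ\mathcal{F})$ via the readout $R_1\cdot\mathbf{1}_n$. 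Your added continuity/fixed-sparsity discussion (padding to a full slate of quadratic nodes) makes explicit a point the paper leaves implicit, and your detour through Proposition~\ref{prop:message_passing_GNN_has_equal_separation_power} is unnecessary here since the separation hypothesis is stated directly for the GNN family, but neither changes the substance of the argument.
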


For the detailed proof, see Appendix~\ref{sec:uniform_approximation_for_continuous_equivariant_compact_support_proof}. 

However, the requirement for the target function to apply the proposition is too strong. In fact, all target functions defined in~\ref{def:QCQP_properties} are non-continuous and not defined on a compact subset, although equivariance naturally holds. Therefore, we seek a continuous approximation with compact support that can be uniformly approximated. By applying Lusin's theorem, we construct the following continuous approximation:

\begin{proposition}[Continuous Approximation with Compact Support]\label{prop:continuous_compact_support_approximation}
    Let \(\Phi: \mathcal{G}^{m,n}_{\mathrm{QCQP}} \to \mathbb{R}^s\) be a general target function that is measurable under the probability measure \(\mathbb{P}\). For any \(\varepsilon > 0\), there exists a compact subset \(\mathcal{G}^{m,n}_{\mathrm{c}} \subseteq \mathcal{G}^{m,n}_{\mathrm{QCQP}}\), such that \(\mathbb{P}\{G \in \mathcal{G}^{m,n}_{\mathrm{c}}\} > 1 - \varepsilon\), and \(\Phi|_{\mathcal{G}^{m,n}_{\mathrm{c}}}\) is continuous.
\end{proposition}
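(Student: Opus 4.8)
The plan is to invoke Lusin's theorem, which states precisely that a measurable function defined on a measure space with a suitable regular measure agrees, outside a set of arbitrarily small measure, with a continuous function on a compact set. The proposition is essentially a restatement of Lusin's theorem tailored to our setting, so the main work is verifying that the hypotheses of Lusin's theorem are met in the space $\mathcal{G}^{m,n}_{\mathrm{QCQP}}$, namely that the measure $\mathbb{P}$ is a regular Borel (Radon) measure on a nice enough topological space, and that the target mapping $\Phi$ is genuinely measurable.

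First I would establish the topological and measure-theoretic preliminaries. Recall from Definition~1 that $\mathcal{G}^{m,n}_{\mathrm{QCQP}}$ is equipped with a product topology coming from the norm topology on the matrix and vector coefficients $(Q,\{Q^i\},p,\{p^i\})$, the Euclidean topology on $(x^{\mathrm{L}},x^{\mathrm{U}},\{b^i\})$, and the discrete topology on the infinite values. Since the parameter space is (a closed subset of) a finite-dimensional normed space augmented with finitely many discrete indicators, $\mathcal{G}^{m,n}_{\mathrm{QCQP}}$ is a separable, metrizable, locally compact, $\sigma$-compact space. On such a space every finite Borel measure is automatically a Radon measure (inner regular with respect to compact sets and outer regular with respect to open sets). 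I would state this regularity explicitly, as it provides the compactness needed to extract $\mathcal{G}^{m,n}_{\mathrm{c}}$.

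Next I would confirm that each target mapping $\Phi$ in Definition~3 is $\mathbb{P}$-measurable. The feasibility and boundedness mappings take values in $\{0,1\}$; their level sets are definable via the feasibility and boundedness conditions, which are Borel (indeed semialgebraic) subsets of parameter space, so these mappings are Borel measurable. The optimal-value mapping is lower/upper semicontinuous on the appropriate feasible-and-bounded region, hence Borel measurable, and the least-$\ell_2$-norm optimal-solution mapping is measurable by a standard measurable-selection argument for the parametric convex program. Having verified measurability, I would apply Lusin's theorem directly: for the given $\varepsilon>0$, there exists a compact set $K\subseteq\mathcal{G}^{m,n}_{\mathrm{QCQP}}$ with $\mathbb{P}\{G\in K\}>1-\varepsilon$ such that the restriction $\Phi|_{K}$ is continuous. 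Setting $\mathcal{G}^{m,n}_{\mathrm{c}}\coloneqq K$ finishes the proof.

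The main obstacle I anticipate is not the invocation of Lusin's theorem itself but the careful handling of the nonstandard topology involving the infinite bound values $x^{\mathrm{L}},x^{\mathrm{U}}$. Because the discrete topology is placed on the $\{\pm\infty\}$ components, the space decomposes into finitely many clopen strata according to which bounds are finite versus infinite; I would argue that $\mathbb{P}$ restricts to a Radon measure on each stratum and apply Lusin's theorem stratum-by-stratum, then take the union of the resulting compact sets (adjusting $\varepsilon$ by the number of strata). A secondary subtlety is measurability of the solution mapping at the boundary of $\mathcal{G}^{m,n}_{\mathrm{QCQP}}$, where feasibility or boundedness can fail; since $\Phi_{\mathrm{sol}}$ is only defined on the feasible-and-bounded region, I would treat its complement as a measurable set on which $\Phi$ may be assigned an arbitrary measurable value, which does not affect the conclusion because Lusin's theorem only requires measurability, not continuity, on the full space.
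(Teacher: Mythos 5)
Your proposal matches the paper's approach exactly: the paper proves this proposition simply by invoking Lusin's theorem (it offers no further detail), and your argument is a correct, fleshed-out version of that same route. Your additional verifications---that $\mathcal{G}^{m,n}_{\mathrm{QCQP}}$ decomposes into finitely many clopen strata on which $\mathbb{P}$ is Radon, and that the target mappings are Borel measurable (which the paper tacitly assumes, extending $\Phi_{\mathrm{obj}}$ and $\Phi_{\mathrm{sol}}$ by a default value off the feasible-and-bounded region, just as you do)---are sound and only make explicit the hypotheses the paper leaves unchecked.
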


By combining all the lemmas and propositions, we can now prove the main theorem.

\begin{proof}[Proof of Theorem~\ref{thm:approximation}]
    Let \(\Phi\) be any target function defined in Definition~\ref{def:QCQP_properties}.
    
    By Proposition~\ref{prop:continuous_compact_support_approximation}, \(\Phi\) is continuous on a compact subset \(\mathcal{G}^{m,n}_{\mathrm{c}} \subseteq \mathcal{G}^{m,n}_{\mathrm{QCQP}}\), with \(\mathbb{P}(G \in \mathcal{G}^{m,n}_{\mathrm{c}}) \geq 1 - \frac{\varepsilon}{|\Sigma|}\).

    We construct \(\mathcal{G}^{m,n}_{\mathrm{c,eq}} = \cap_{(\sigma,\tau)\in\Sigma} (\sigma,\tau)(\mathcal{G}^{m,n}_{\mathrm{c}})\). This subset is continuous with compact support, ensuring that \(\Phi|_{\mathcal{G}^{m,n}_{\mathrm{c,eq}}}\) remains an equivariant function, with the following measure control:
    
    \begin{equation}
        \mathbb{P}(G \in \mathcal{G}^{m,n}_{\mathrm{c,eq}}) > 1 - \varepsilon
    \end{equation}
    
    Since by Proposition~\ref{cor:target_function_passing} and the fact that the Tripartite WL-test has equal separation power as the tripartite MP-GNNs, the target functions are equivariant and separated by \(\mathcal{F}_{\mathrm{QCQP}}^{m,n}(\mathbb{R}^s)\). Thus, we may apply Proposition~\ref{prop:uniform_approximation_for_continuous_equivariant_compact_support} and obtain \(F \in \mathcal{F}_{\mathrm{QCQP}}^{m,n}(\mathbb{R}^s)\) such that:
    \[
        \|F(G) - \Phi(G)\| < \delta, \forall G \in \mathcal{G}^{m,n}_{\mathrm{c,eq}}
    \]
    This implies that \(\mathbb{P}\{\|F(G) - \Phi(G)\| < \delta\} > 1 - \varepsilon\).
\end{proof}

\section{Proof of propositions in Section ~\ref{sec:detailed_proof}}

This section provides complete proofs of several propositions in Section~\ref{sec:detailed_proof} that were not immediately proven.

\subsection{Equivariance}\label{sec:equivariance}

We begin by describing equivariance, a key tool used to capture the fact that the indexing of variables and constraints is irrelevant:

\begin{definition}
    Given a function $f:X \to Y$, where $X$ and $Y$ are subsets of Euclidean spaces, and a group $\Sigma$ that acts continuously on $X$ and $Y$, the function $f$ is called equivariant (with respect to the group $\Sigma$) if the following holds:
    \begin{equation*}
        \sigma \circ f(x) = f \circ \sigma(x), \quad \forall x \in X, \sigma \in \Sigma
    \end{equation*}
\end{definition}

Since the indexing of variables and constraints does not affect the problem, we take $\Sigma = S_n \times S_m$, which represents all possible re-indexings of variables and constraints. When applied to both the input and output spaces, we re-index the variables, constraints, and possible solutions (in cases where the output is a solution $x \in \mathbb{R}^n$). Specifically, we have:

\begin{equation*}
    \begin{aligned}
        \tilde{q}_{\pi(j),\pi(k)} & = q_{j,k} \\
        \tilde{p}_{\pi(j)} & = p_j \\
        \tilde{q}^{\tau(i)}_{\pi(j),\pi(k)} & = q^i_{j,k} \\
        \tilde{p}^{\tau(i)}_{\pi(j)} & = p^i_j \\
        \tilde{b}_{\tau(i)} & = b_i \\
        \tilde{x}^{\mathrm{L}}_{\pi(j)} & = x^\mathrm{L}_j \\
        \tilde{x}^{\mathrm{U}}_{\pi(j)} & = x^\mathrm{U}_j \\
    \end{aligned}
\end{equation*}
where the tilde symbols $\tilde{Q}, \tilde{p}, \tilde{b}, \tilde{x}^{\mathrm{L}}, \tilde{x}^{\mathrm{U}}$ denote the re-indexed vectors and matrices.

For $\mathbb{R}^s = \mathbb{R}$, the action on the output space is the identity map: $(\pi, \tau)(\cdot) = \mathrm{id}$. For $\mathbb{R}^s = \mathbb{R}^n$, we correspondingly re-index the output, i.e., $(\pi, \tau)(y)_{\pi(j)} = y_j$.

We can also apply the permutations to:
\begin{itemize}
    \item A point in $\mathbb{R}^n$ (such as a solution), by $(\pi, \tau)(x)_{\pi(j)} = x_j$.
    \item A subset of $\mathbb{R}^n$, by applying the permutation to each element in the subset, or to its indicator function by permuting the underlying set.
\end{itemize}

Equivariance allows us to show that the indices do not matter, while the inputs (in the form of coefficient tuples) necessarily carry these indices.

\textbf{Remark}: Given the group $\Sigma$ and its action on both the input and output, all message-passing layers are automatically equivariant. Thus, requiring $F \in \mathcal{F}_{\mathrm{QCQP}}^{m,n}(\mathbb{R}^s)$ to be equivariant is equivalent to requiring the readout layer $R$ to be equivariant. This is why the readout function must take specific forms in the two cases. While the defined forms do not cover all possible equivariant readout functions, they are general enough to capture the separation power. 

\subsection{Proof of Lemma~\ref{lemma:solution_passing}}\label{sec:core_lemma_proof}

For simplicity of proof, we extend the definitions of $\Phi_{\mathrm{obj}}$ and $\Phi_{\mathrm{sol}}$ to the entire space $\mathcal{G}^{m,n}_{\mathrm{QCQP}}$ by assigning a default value of $0$ (or $\boldsymbol{0}$, depending on the output dimension $s$) when the target function is not defined at a graph $G$. This occurs when the corresponding instance is either infeasible or unbounded, and the optimal value or optimal solution does not exist. By doing so, all target functions are defined on the same space $\mathcal{G}^{m,n}_{\mathrm{QCQP}}$. Moreover, since we approximate feasibility and boundedness, we can distinguish whether the output is the default value or genuinely happens to be $0$ (or $\boldsymbol{0}$).

Let $\mathcal{I}$ and $\Bar{\mathcal{I}}$ be two instances (with Tripartite graph representations $G$ and $\Bar{G} \subseteq \mathcal{G}^{m,n}_{\mathrm{QCQP}}$) that are not separated by the Tripartite WL-test. Without loss of generality, we assume that the variables and constraints are correspondingly indexed, i.e., $C^{T,u_j} = C^{T,\Bar{u}_j}$ and $C^{T,c_i} = C^{T,\Bar{c}_i}$ hold for all $i,j$.

We first introduce the following notations. Let $I$ be any color, and we collect all nodes of a graph $G$ with color $I$, denoting this collection as $G(I)$. Throughout this paper, we use $J$ for the colors of variable nodes, $K$ for quadratic nodes, and $I$ for constraint nodes.

We now present the following lemma:

\begin{lemma}\label{lem:stable_color_sum}
    Given the graph $G$, let the Tripartite WL-test stabilize after $T \geq 0$ iterations. The sum of weights from a certain node of one color to all nodes of another color depends only on the color of the given node. Specifically, the sum (taking $J$ for variable nodes and $K$ for quadratic nodes as an example) is:
    \begin{equation*}
        S(J,K;G) := \sum_{C^{T,v}=K} w_{u,v}
    \end{equation*}
    and is well-defined with $u \in G(J)$ arbitrarily chosen.

    Similarly, for any color of constraints $I$, color of variables $J$, and color of quadratic terms $K$, the following sums are well-defined:
    \begin{equation*}
        \begin{split}
            S(J,I;G) & := \sum_{C^{T,c}=I} w_{u,c}, \quad C^{T,u}=J \\
            S(I,K;G) & := \sum_{C^{T,v}=K} w_{c,v}, \quad C^{T,c}=I \\
            S(K,I;G) & := \sum_{C^{T,c}=I} w_{v,c}, \quad C^{T,v}=K \\
            S(J,K;G) & := \sum_{C^{T,v}=K} w_{u,v}, \quad C^{T,u}=J \\
            S(K,J;G) & := \sum_{C^{T,u}=J} w_{v,u}, \quad C^{T,v}=K \\
        \end{split}
    \end{equation*}
\end{lemma}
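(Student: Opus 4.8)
The plan is to derive all five identities from a single structural fact: once the Tripartite WL-test stabilizes, any two nodes sharing a stable color must have produced identical arguments to the HASH function that assigned that color. I would first record a preliminary observation about the partition. Within any single iteration the colors refine monotonically, since each update feeds the node's own current color into the HASH; hence $C^{T,\cdot}$, $\bar C^{T,\cdot}$, and $C^{T+1,\cdot}$ are successively at least as fine as one another on each node type. Because stabilization means $C^{T+1}$ induces the same partition as $C^T$, all of these colorings induce one and the same partition. In particular ``the class of color $K$'' is unambiguous whether it is read off from $C^{T,v}$ or from the intermediate $\bar C^{T,v}$, which is exactly what lets me match the aggregations appearing in the four update rules to the stable colors used in the statement.

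Next, for each sub-layer I would invoke collision-freeness of the outer HASH. Take the variable-node update as the representative case: if $C^{T,u}=C^{T,u'}=J$, then by the previous paragraph $C^{T+1,u}=C^{T+1,u'}$, so the tuples hashed to produce these colors coincide; comparing the two aggregation coordinates gives $\sum_{c} w_{c,u}\,\mathrm{HASH}(C^{T+1,c}) = \sum_{c} w_{c,u'}\,\mathrm{HASH}(C^{T+1,c})$ and likewise for the quadratic-node sum. Grouping each sum by neighbor color rewrites it as $\sum_{I}\mathrm{HASH}(I)\big(\sum_{C^{T,c}=I} w_{c,u}\big)$, and similarly $\sum_K \mathrm{HASH}(K)\big(\sum_{C^{T,v}=K} w_{v,u}\big)$, which are the profiles underlying $S(J,I)$ and $S(J,K)$. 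The analogous computation on the $V_1\to V_2$, $V_3\to V_2$, and $(V_1,V_2)\to V_3$ updates produces the profiles underlying $S(K,J)$, $S(K,I)$, and $S(I,K)$ respectively, using where needed that the intermediate and stable colorings induce the same partition and that $w_{u,v}=w_{v,u}$ (and its analogues) lets me read each sum in whichever direction the relevant update supplies.

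The crux is the passage from equality of a single real-valued weighted sum to equality of each color-wise partial sum. Here I would lean on the hypothesis that the HASH values are real and collision-free in the strong sense the paper adopts: since the graph is finite there are only finitely many possible partial-sum profiles, so the values $\{\mathrm{HASH}(K)\}_K$ (resp. $\{\mathrm{HASH}(I)\}_I$) may be taken rationally independent, whereupon $\sum_K \mathrm{HASH}(K)\,a_K = \sum_K \mathrm{HASH}(K)\,b_K$ forces $a_K=b_K$ for every $K$. Applying this to the equalities above yields, for same-colored source nodes, equality of the color-wise weight sums, which is precisely the asserted well-definedness of $S(J,K)$, $S(J,I)$, $S(I,K)$, $S(K,I)$, and $S(K,J)$. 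I expect this genericity/injectivity step to be the only delicate point; the rest is bookkeeping over the four sub-layers.
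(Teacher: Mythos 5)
Your proposal follows essentially the same route as the paper's proof: the paper likewise argues that, at stabilization, two nodes of the same color must receive equal aggregated messages, regroups the weighted sum by neighbor color as $\sum_J \bigl(\sum_{C^{T,u}=J} w_{u,v}\bigr)\mathrm{HASH}(J)$, and invokes collision-freeness of the real-valued hash to conclude the color-wise weight sums agree, handling the remaining four sums ``similarly''; your preliminary observation that $C^{T,\cdot}$, $\bar C^{T,\cdot}$, and $C^{T+1,\cdot}$ induce the same partition is a point the paper leaves implicit there, and you are right that it is needed for the sums involving the intermediate coloring of quadratic nodes. One small correction to your final step: with real edge weights, rational independence of the hash values is not sufficient (e.g.\ $\sqrt{2}\cdot 1 + (-1)\cdot\sqrt{2} = 0$ with $\{1,\sqrt{2}\}$ rationally independent); since only finitely many partial-sum profiles occur on a fixed finite graph, you should instead take the hash values linearly independent over the field generated by the weights (a generic choice avoids the finitely many bad hyperplanes), or simply read the paper's ``collision-free'' assumption as injectivity of the weighted-multiset aggregation itself, after which your argument goes through verbatim.
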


\begin{proof}
    Let $v, v'$ be two nodes with color $K = C^{T,v} = C^{T,v'}$. Since the Tripartite WL-test has stabilized, further iterations do not separate additional node pairs, i.e.,
    \begin{equation*}
        \sum_u w_{u,v}\mathrm{HASH}(C^{T,u}) = \sum_u w_{u,v'}\mathrm{HASH}(C^{T,u}).
    \end{equation*}
    Rearranging according to $J = C^{T,u}$, we get:
    \begin{equation*}
        \sum_J \sum_{C^{T,u}=J} w_{u,v} \cdot \mathrm{HASH}(J) = \sum_J \sum_{C^{T,u}=J} w_{u,v'} \cdot \mathrm{HASH}(J).
    \end{equation*}
    Assuming that the hash function is collision-free, we conclude that:
    \begin{equation*}
        \sum_{C^{T,u}=J} w_{u,v} = \sum_{C^{T,u}=J} w_{u,v'},
    \end{equation*}
    i.e., $S(K,J;G) := \sum_{C^{T,u}=J} w_{v,u}, \quad C^{T,v}=K$ is well-defined.

    The other claims follow similarly.
\end{proof}

By summing all weights between two colors $I$ and $J$, we derive the following lemma:

\begin{lemma}\label{lem:stable_color_equation}
    Let $J$ and $K$ be arbitrary node colors. Then, the following holds:
    \begin{equation*}
        |G(J)|S(J,K;G) = |G(K)|S(K,J;G),
    \end{equation*}
    and similar equalities hold between $I$ and $J$, and between $I$ and $K$.
\end{lemma}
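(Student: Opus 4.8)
The plan is to prove the identity by double-counting the total edge weight running between the two color classes $G(J)$ and $G(K)$. Concretely, I would introduce the aggregate quantity
\begin{equation*}
    W := \sum_{u \in G(J)} \sum_{v \in G(K)} w_{u,v},
\end{equation*}
the sum of the weights of all edges joining a node of color $J$ to a node of color $K$, and then evaluate $W$ in two different summation orders, matching the two results.

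First I would group by the node of color $J$. For each fixed $u \in G(J)$, the inner sum $\sum_{v \in G(K)} w_{u,v}$ is exactly $S(J,K;G)$ by the definition in Lemma~\ref{lem:stable_color_sum}, and — crucially — this value does not depend on which representative $u \in G(J)$ is chosen, precisely because Lemma~\ref{lem:stable_color_sum} guarantees that $S(J,K;G)$ is well-defined. Summing over the $|G(J)|$ nodes of color $J$ therefore gives $W = |G(J)|\, S(J,K;G)$. Symmetrically, grouping by the node of color $K$ and using the symmetry $w_{u,v} = w_{v,u}$ of the edge weights, each inner sum $\sum_{u \in G(J)} w_{v,u}$ equals $S(K,J;G)$, again well-defined by the preceding lemma, so that $W = |G(K)|\, S(K,J;G)$. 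Equating the two evaluations of $W$ yields the claimed identity.

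The same argument transfers verbatim to the remaining color pairs (constraint–variable and constraint–quadratic), since the corresponding sums $S(\cdot,\cdot;G)$ are all established to be well-defined in Lemma~\ref{lem:stable_color_sum} and every edge weight in the tripartite graph is symmetric by construction. I do not expect a genuine obstacle in this proof; the only two points that require care are (i) that each inner sum may be pulled out of the count and replaced by the single value $S(\cdot,\cdot;G)$ only because of the well-definedness supplied by Lemma~\ref{lem:stable_color_sum}, and (ii) that one must invoke $w_{u,v} = w_{v,u}$ when reversing the order of summation, so that the second grouping genuinely produces $S(K,J;G)$ rather than a differently oriented edge sum.
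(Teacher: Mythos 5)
Your proof is correct and matches the paper's own argument, which likewise sums all edge weights between the color classes $G(J)$ and $G(K)$ and rearranges the sum by $u$ and by $v$, invoking the well-definedness of $S(\cdot,\cdot;G)$ from Lemma~\ref{lem:stable_color_sum}. Your write-up merely makes explicit two points the paper leaves implicit (the replacement of each inner sum by its common value, and the symmetry $w_{u,v}=w_{v,u}$), so there is nothing to add.
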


\begin{proof}
    Summing all edges between all nodes with $C^{T,u}=J$ and $C^{T,v}=K$, and re-arranging the sum according to $u$ and $v$, by Lemma~\ref{lem:stable_color_sum}, we have:
    \begin{equation*}
        |G(J)|S(J,K;G) = |G(K)|S(K,J;G).
    \end{equation*}

    The other two claims are similar.
\end{proof}

We are now ready to proceed. We construct $\Bar{x}_j = \frac{1}{|G(J)|} \sum_{j':C^{T,u_{j'}}=C^{T,\Bar{u}_{j'}}=J} x_{j'}$, where $J=C^{T,x_j}$. We claim that $\Bar{x}$ satisfies all the required conditions.

First, we analyze the \textbf{linear part} of the constraints and the objective. Let $f^i_{\mathrm{lin}}(x):= p^i \cdot x$ represent the linear part of the $i$-th constraint. For a certain color $I$ of constraint nodes, we have:
\begin{equation}
    \begin{split}
        \Bar{f}^i_{\mathrm{lin}}(\Bar{x}) & = \sum_j \Bar{p}^i_j \Bar{x}_j \\
        & = \sum_J \sum_{\bar{v}_j \in G(J)} \Bar{p}^i_j \Bar{x}_j \\
        & = \sum_J S(I,J) \Bar{x}_J \\
        & = \frac{1}{|G(I)|} \sum_J S(J,I) |G(J)| \Bar{x}_J \\
        & = \frac{1}{|G(I)|} \sum_J S(J,I) \sum_{u_j \in G(J)} x_j \\
        & = \frac{1}{|G(I)|} \sum_{c_i \in G(I)} \sum_J \sum_{j \in G(J)} p^i_j x_j \\
        & = \frac{1}{|G(I)|} \sum_{c_i \in G(I)} f^i_{\mathrm{lin}}(x).
    \end{split}
\end{equation}

Here, $\Bar{x}_j$ is the average over the nodes with color $J$, so it is determined by $J$, and we denote its value as $\Bar{x}_J$.

We define $f_{\mathrm{lin}}(x) = p \cdot x$. For the objective part, we have:
\begin{equation}
    \begin{split}
        \sum_j \Bar{p}_j \Bar{x}_j & = \sum_J p_J |\Bar{G}(J)| \Bar{x}_J \\
        & = \sum_J p_J \sum_{u_j \in G(J)} x_j \\
        & = \sum_j p_j x_j,
    \end{split}
\end{equation}
where $p_j, \Bar{p}_j$ are the features of the variables, which are determined by the color $J = C^{T,u_j} = C^{T,\Bar{u}_j}$. We denote this value by $p_j = \Bar{p}_j = p_J$.

\textbf{Quadratic part}. We define $f^i_{\mathrm{quad}}(x) = \frac{1}{2} x^\top Q^i x$ as the \textbf{quadratic} part of the $i$-th constraint.

For a certain color $I$ of constraint nodes, we have the following:
\begin{equation*}
    \begin{split}
        \Bar{f}^i_{\mathrm{quad}}(\Bar{x}) & = \frac{1}{2} \sum_{v_{j,k} \in V_2(\Bar{G})} \Bar{q}^i_{j,k} \Bar{x}_j \Bar{x}_k \\
        & = \frac{1}{2} \sum_K \sum_{\Bar{v}_{j,k} \in \Bar{G}(K)} \Bar{q}^i_{j,k} \Bar{x}_j \Bar{x}_k \\
        & = \frac{1}{2} \sum_K S(I,K) |\Bar{G}(K)| \Bar{x}_K.
    \end{split}
\end{equation*}
Since all $\Bar{v}_{j,k} \in V_2(\Bar{G})$ have $\Bar{u}_j, \Bar{u}_k$ as neighbors in $V_1(\Bar{G})$, $\Bar{x}_K := \Bar{x}_j \Bar{x}_k$ is well-defined. This equation shows that the value $\Bar{f}^i_{\mathrm{quad}}(\Bar{x})$ depends only on the color $I = C^{T,\Bar{c}_i}$, and not on the specific selection of $\Bar{c}_i \in \Bar{G}(I)$. Therefore, $f^i_{\mathrm{quad}}(\Bar{x})$ reduces to the sum, and we claim that $f^i_{\mathrm{quad}}(\Bar{x}) = \Bar{f}^i_{\mathrm{quad}}(\Bar{x})$ holds.

Next, we consider the partial derivative. Let $J := C^{T,u_j}$, and we have:
\begin{equation}
    \begin{split}
        \partial_j \sum_{c_i \in G(I)} f^i_{\mathrm{quad}}(\Bar{x}) & = \sum_{c_i \in G(I)} \sum_k w(u_j, v_{j,k}) w(v_{j,k}, c_i) \Bar{x}_k \\
        & = \sum_{c_i \in G(I)} \sum_K \sum_{k:v_{j,k} \in G(K)} w(u_j, v_{j,k}) w(v_{j,k}, c_i) \Bar{x}_k \\
        & = \sum_K S(K, I) \sum_{k:v_{j,k} \in G(K)} w(u_j, v_{j,k}) \Bar{x}_k \\
        & = \sum_K S(K, I) S(J, K) x_{K;J}.
    \end{split}
\end{equation}
Since $u_j$ is one of the neighbors in $v_{j,k}$, and $v_{j,k} \in G(K)$ has exactly two neighbors in $V_1(G)$, we know that the color of $u_k$ depends only on the colors $K = C^{T,v_{j,k}}$ and $J = C^{T,v_j}$. This makes $\Bar{x}_{K;J} := \Bar{x}_k$ well-defined, with $u_j \in G(J)$ and $v_{j,k} \in G(K)$.

Thus, the derivative $\partial_j \sum_{c_i \in G(I)}$ depends only on $J = C^{T,u_j}$, i.e.,
\begin{equation}\label{eq:equal_derivative}
    C^{T,v_{j_1}} = C^{T,v_{j_2}} \Rightarrow \partial_{j_1} \sum_{c_i \in G(I)} f^i_{\mathrm{quad}}(\Bar{x}) = \partial_{j_2} \sum_{c_i \in G(I)} f^i_{\mathrm{quad}}(\Bar{x}).
\end{equation}

By \eqref{eq:equal_derivative}, we know that $\Bar{x}$ is a local optimal point within the linear space:
\begin{equation*}
    \{y \in \mathbb{R}^n : \sum_{u_j \in G, C^{T,u_j} = J} y_j = \sum_{u_j \in G, C^{T,u_j} = J} x_j\}.
\end{equation*}

With the convexity assumption, the local optimal point is a global minimum. Since $x$ is in this linear space, we claim that:
\begin{equation}\label{eq:average_sol_leq}
    \sum_{c_i \in G(I)} f^i_{\mathrm{quad}}(\Bar{x}) \leq \sum_{c_i \in G(I)} f^i_{\mathrm{quad}}(x).
\end{equation}

Combining \eqref{eq:average_sol_leq} with the fact that $f^i_{\mathrm{quad}}(\Bar{x})$ and $\Bar{f}^i_{\mathrm{quad}}(\Bar{x})$ are equal for all $c_i \in G(I)$, we can control the quadratic parts:
\begin{equation}
    \begin{split}
        \Bar{f}^i_{\mathrm{quad}}(\Bar{x}) & = f^i_{\mathrm{quad}}(\Bar{x}) \\
        & = \frac{1}{|G(I)|} \sum_{c_i \in G(I)} f^i_{\mathrm{quad}}(\Bar{x}) \\
        & \leq \frac{1}{|G(I)|} \sum_{c_i \in G(I)} f^i_{\mathrm{quad}}(x).
    \end{split}
\end{equation}

For the objective part, we define $f_{\mathrm{quad}}(x) = \frac{1}{2} x^\top Q x$. Similarly, we have:
\begin{equation*}
    \begin{split}
        \Bar{f}_{\mathrm{quad}}(\Bar{x}) & = \frac{1}{2} \sum_{\Bar{v}_{j,k}} f^0(\Bar{v}_{j,k}) \Bar{x}_j \Bar{x}_k \\
        & = \frac{1}{2} \sum_K \sum_{\Bar{v}_{j,k} \in \Bar{G}(K)} f^0(\Bar{v}_{j,k}) \Bar{x}_j \Bar{x}_k \\
        & = \frac{1}{2} \sum_K \sum_{v_{j,k} \in G(K)} f^0(v_{j,k}) \Bar{x}_j \Bar{x}_k \\
        & = f_{\mathrm{quad}}(\Bar{x}).
    \end{split}
\end{equation*}

We also have:
\begin{equation*}
    \begin{split}
        \partial_j f_{\mathrm{quad}}(\Bar{x}) & = \sum_k f^0(v_{j,k}) w(u_j, v_{j,k}) \Bar{x}_k \\
        & = \sum_K \sum_{k:v_{j,k} \in G(K)} f^0(v_{j,k}) w(u_j, v_{j,k}) \Bar{x}_k \\
        & = \sum_K f^0(K) S(J, K) \Bar{x}_{K;J},
    \end{split}
\end{equation*}
which depends only on $J = C^{T,u_j}$. Here, $f^0(K) = f^0(v_{j,k})$, and $v_{j,k} \in G(K)$ is well-defined by the stable color assumption.

\textbf{Combination of the two parts.}

The color $C^{T,c_i} = C^{T,\Bar{c}_i} = I$ determines the RHS $b_I := b_i$. Defining $f^i_{\mathrm{cons}}(x) = f^i_{\mathrm{quad}}(x) + f^i_{\mathrm{lin}}(x)$, and similarly for $\Bar{\mathcal{I}}$, we have:
\begin{equation*}
    \begin{split}
        \bar{f}^i_{\mathrm{cons}}(\Bar{x}) & = \Bar{f}^i_{\mathrm{quad}}(\Bar{x}) + \Bar{f}^i_{\mathrm{lin}}(\Bar{x}) \\
        & = \frac{1}{|G(I)|} \sum_{c_i \in G(I)} \left( f^i_{\mathrm{quad}}(x) + f^i_{\mathrm{lin}}(x) \right) \\
        & = \frac{1}{|G(I)|} \sum_{c_i \in G(I)} f^i_{\mathrm{cons}}(x) \\
        & \leq b_I.
    \end{split}
\end{equation*}

For the objective, we similarly have:
\begin{equation*}
    \Bar{f}_{\mathrm{quad}}(\Bar{x}) + \Bar{f}_{\mathrm{lin}}(\Bar{x}) \leq f_{\mathrm{quad}}(x) + f_{\mathrm{lin}}(x).
\end{equation*}

This completes the proof that $\Bar{x}$ is the solution for $\Bar{\mathcal{I}}$, satisfying the condition given in Proposition~\ref{lemma:solution_passing}.

\subsection{Proof of Proposition~\ref{prop:message_passing_GNN_has_equal_separation_power}}\label{sec:message_passing_GNN_has_equal_separation_power_proof}

We prove the separation power by simulating the tripartite WL-test using tripartite MP-GNNs. We define the hidden representation $h^{t,\cdot}$, produced by some network, as a \textbf{one-hot representation} of the colors $C^{t,\cdot}$ if all $h^{t,\cdot}$ are one-hot vectors, and they take the same value if and only if they have the same color $C^{t,\cdot}$.

First, we consider the color initialization. We collect all the features paired with the node types (i.e., variable nodes, quadratic nodes, and constraint nodes). Then we select $g^0_{1,2,3}$ to map the features to one-hot vectors, where the enumeration serves as the only index with the value $1.0$. For example, if the feature $h^{u_j}$ of a variable node is enumerated by $r$, then $g^0_1$ maps $h^{u_j}$ to $h^{0,u_j}=e_r$.

It's easy to see that the embedded hidden feature $h^{0,\cdot}$ is a one-hot representation of the initial color $C^{0,\cdot}$.

Next, we consider the first refinement. Assuming that $h^{t,\cdot}$ is a one-hot representation of $C^{t,\cdot}$ and $g^t_1 = \mathrm{id}$ is a simple and proper hash function, the concatenated vector
\[
\left[h^{t,v}, \sum_{u \in V_1} w_{u,v} f_1^t(h^{t,u})\right]
\]
is a representation of the colors $\Bar{C}^{t,\cdot}$, which is generally not one-hot. The same holds for the other three concatenated vectors from the remaining three sub-layers. By Theorem 3.2 of \citealp{yun2019small}, a network with four fully connected layers and ReLU activation maps these values back to one-hot. Therefore, we select $f^t_1$ to concatenate the inputs and then pass them through a 4-layered MLP with ReLU activation, so that the aggregated hidden representation $\Bar{h}^{t,\cdot}$ is once again one-hot.

Similarly, we get $h^t_{2,3,4}$ and $g^t_{2,3,4,5,6}$ and simulate an iteration of the Tripartite WL-test with a round of four message-passing sub-layers.

In the case of graph-level output, the readout function takes the following form:
\[
R(\cdot) = f_{\mathrm{out}}\left( \sum_j h^{T,u_j}, \sum_{j,k} h^{T,v_{j,k}}, \sum_i h^{T,c_i} \right).
\]
Since the hidden representation is a one-hot representation of $C^{T,\cdot}$, if two instances are not separated by the tripartite message-passing GNN, they are not separated by this subset of GNNs (given a fixed initialization and a free readout function). Consequently, all entries must be equal, and the two instances are not separated by the Tripartite WL-test.

Similarly, in the case of node-level output, all equivariant readout functions take the form:
\[
R(\cdot)_j = f_{\mathrm{out}}\left(h^{T,u_j}, \sum_j h^{T,u_j}, \sum_{j,k} h^{T,v_{j,k}}, \sum_i h^{T,c_i}\right).
\]
Thus, all entries must be equal, and the two instances are not separated. Moreover, the variables are correspondingly indexed.

Conversely, we use induction to prove that for all $t \in \mathbb{N}$, the colors $C^{t,\cdot}$ separate more than the hidden features $h^{t,\cdot}$, i.e.,
\begin{equation}
    C^{t,u} = C^{t,u'} \Rightarrow h^{t,u} = h^{t,u'}, \quad \forall u, u' \in V_1 \cup \Bar{V}_1, F \in \mathcal{F}_{\mathrm{QCQP}}^{m,n}(\mathbb{R}^s),
\end{equation}
and similar claims hold for the other three sub-iterations.

For $t=0$ (i.e., right after embedding), the statement is obviously true. Now, assume that after some sub-iteration (say, before the first sub-iteration of iteration $t \geq 1$, with the other sub-iterations following similarly), the statement holds.

Let $v, v'$ satisfy:
\[
\sum_{u} w_{u,v} \mathrm{HASH}(C^{t,u}) = \sum_{u} w_{u,v'} \mathrm{HASH}(C^{t,u}).
\]
Organizing the sum by $C^{t,u} = J$, and assuming the hash function is collision-free, we have:
\begin{equation}\label{eq:proof_equal_separation_power_1}
    \sum_{u: C^{t,u} = J} w_{u,v} = \sum_{u: C^{t,u} = J} w_{u,v'}, \quad \forall J.
\end{equation}

Next, we organize the sum $\sum_{u} w_{u,v} f_1^t(h^{t,u})$ by the value of $h^{t,u}$. By the induction assumption, the set $\{u: h^{t,u} = h\}$ is the union of $\{u: C^{t,u} = J_l\}$ for some colors $J_l$. Summing the equality in~\eqref{eq:proof_equal_separation_power_1} over the colors, we have:
\[
\sum_{h^{t,u} = h} w_{u,v} = \sum_{h^{t,u} = h} w_{u,v'}, \quad \forall h.
\]
Thus, we conclude:
\[
\sum_{u} w_{u,v} f_1^t(h^{t,u}) = \sum_h \sum_{u: h^{t,u} = h} w_{u,v} f_1^t(h) = \sum_h \sum_{u: h^{t,u} = h} w_{u,v'} f_1^t(h) = \sum_{u} w_{u,v'} f_1^t(h^{t,u}),
\]
which completes the induction.

For the case of graph-level output, this means that all entries of the input to the readout function are equal for the two graphs, i.e.,
\[
\left(\sum_j h^{T,u_j}, \sum_{j,k} h^{T,v_{j,k}}, \sum_i h^{T,c_i}\right) = \left(\sum_j h^{T,\Bar{u}_j}, \sum_{j,k} h^{T,\Bar{v}_{j,k}}, \sum_i h^{T,\Bar{c}_i}\right),
\]
and the GNNs give the same output for all possible readout functions.

For the case of node-level output, we again have:
\[
\left(h^{T,u_j}, \sum_j h^{T,u_j}, \sum_{j,k} h^{T,v_{j,k}}, \sum_i h^{T,c_i}\right) = \left(h^{T,\Bar{u}_j}, \sum_j h^{T,\Bar{u}_j}, \sum_{j,k} h^{T,\Bar{v}_{j,k}}, \sum_i h^{T,\Bar{c}_i}\right).
\]
Here, we use the assumption that the variables are correspondingly indexed to guarantee $h^{T,u_j} = h^{T,\Bar{v}_j}$.

\subsection{Proof of Proposition~\ref{prop:uniform_approximation_for_continuous_equivariant_compact_support}}\label{sec:uniform_approximation_for_continuous_equivariant_compact_support_proof}

The requirement for the general target function $\Phi_c$ is simply equivariance under re-indexing. Thus, we need to verify the conditions required by the generalized Weierstrass theorem (Theorem 22 of~\cite{azizian2020expressive}) to apply.

First, we verify that $\mathcal{F} = \mathcal{F}^{m,n}_{\mathrm{QCQP}}(\mathbb{R}^s)$ is a sub-algebra. By multiplying the readout function by $\lambda$, we construct $\lambda F \in \mathcal{F}_{\mathrm{QCQP}}^{m,n}(\mathbb{R}^s)$. Now, we construct the sum and product of two functions $F_1, F_2 \in \mathcal{F}_{\mathrm{QCQP}}^{m,n}(\mathbb{R}^s)$.

Given $F_1$ and $F_2$, we proceed as follows:
\begin{itemize}
    \item We construct
    \[
        g^0_{1,F}(h^{0,u}) := \left[g^0_{1,F_1}(h^{0,u}), g^0_{1,F_2}(h^{0,u})\right].
    \]
    We give similar constructions for $g^0_{2,F}$ and $g^0_3$.
    \item After initialization, all hidden features take the form $h^{t,u} = [h^{t,u}_{F_1}, h^{t,u}_{F_2}]$ (considering variable nodes as an example, and similarly for quadratic nodes). We construct
    \[
        g^t_{1,F}(h^{t,u}) := \left[g^t_{1,F_1}(h^{t,u}_{F_1}), g^t_{1,F_2}(h^{t,u}_{F_2})\right],
    \]
    and
    \[
        f^t_{1,F}(h^{t,v}, \sum_u w_{uv} h^{t,u}) := \left[f^t_{1,F_1}(h^{t,v}_{F_1}, \sum_u w_{uv} h^{t,u}_{F_1}), f^t_{1,F_2}(h^{t,v}_{F_2}, \sum_u w_{uv} h^{t,u}_{F_2})\right].
    \]
    We give similar constructions for other $g^t_{\cdot,F}, f^t_{\cdot,F}$. Using this construction, we compute both hidden representations in one concatenated network.
    \item Finally, we obtain $F = F_1 + F_2$ by constructing $R(\cdot) = R_1(\cdot_{F_1}) + R_2(\cdot_{F_2})$, and similarly for $F = F_1 \times F_2$.
\end{itemize}

Thus, we conclude that $F_1 + F_2, F_1 \times F_2 \in \mathcal{F}_{\mathrm{QCQP}}^{m,n}(\mathbb{R}^s)$.

Next, we verify the inclusion $\rho(\mathcal{F}_{\mathrm{scal}}) \subseteq \rho(\pi_{\Sigma} \circ \mathcal{F})$:

\textbf{Graph-level output case}. In this case, we have $\mathcal{F}_{\mathrm{scal}} = \mathcal{F}$ and $\pi_{\Sigma} = \mathrm{id}$, so the two sides are exactly the same.

\textbf{Node-level output case}. Given any $R_1$ that maps the final hidden representation to a graph-level output, $R_1 \cdot \mathbf{1}_n = (R_1, R_1, \dots, R_1)$ is a valid equivariant readout function in the node-level case. Thus, given any $F \in \mathcal{F}_{\mathrm{QCQP}}^{m,n}(\mathbb{R})$, we can construct $F' \in \mathcal{F}_{\mathrm{QCQP}}^{m,n}(\mathbb{R}^n)$ using $R_1 \cdot \mathbf{1}_n$, along with all the $f$ and $g$ functions, and conclude that any pair $(G_1, G_2) \in \rho(\mathcal{F}_{\mathrm{scal}})$ is not separated by the Tripartite WL-test.

For any pair of graphs $(G, \Bar{G})$ that is not separated by the Tripartite WL-test, after re-indexing variables and constraints, all $F \in \mathcal{F}$ map them to the same output. This means that, without re-indexing, all $F \in \mathcal{F}$ map the two graphs to outputs that differ at most by a re-indexing. Thus, $(G, \Bar{G})$ is contained in $\rho(\pi_{\Sigma} \circ \mathcal{F})$. This completes our verification.

Applying the Generalized Weierstrass-Stone theorem to the sub-algebra $\mathcal{F} = \mathcal{F}_{\mathrm{QCQP}}^{m,n}(\mathbb{R}^s)$ completes the proof.

\section{Proof of propositions in Section~\ref{sec:non_convex_counter_examples}}\label{sec:GNN_not_enough_separation_power_on_non_convex_QCQPs_proof}

The two instances are {QCQP} instances. Both graphs $G$ and $\Bar{G}$ consist of the following:

\begin{itemize}
    \item 6 variable nodes, i.e., $u_j$ or $\Bar{u}_j$, where $j \in [6]$. All nodes carry the feature $h^{u_j} = (0, -1, 1)$. here we assume that $-1\le x_i\le 1$ by the unit ball constraint. 
    \item 12 effective quadratic nodes. The squared nodes carry $h^{v_{j,j}}=(0)$, while others carry the feature $h^{v_{j,k}} = (1)$.
    \item 1 constraint node $c$ representing the unit ball constraint. The node carries feature $(-1)$ for both graphs.
\end{itemize}

We now verify that the Tripartite WL-test does not separate the two graphs:
\begin{itemize}
    \item After initialization, we have $h^0_1 := h^{0,u} = h^{0,\Bar{u}} = \mathrm{HASH}_1((0, -1, 1))$, $h^0_2 := h^{0,v_{j,j}} = h^{0,\Bar{v}} = \mathrm{HASH}_2((0))$, $h^0_3:=h^{0,v_{j,k}}=\mathrm{HASH}_2((1))$ and $h^0_4 := h^{0,c} = h^{0,\Bar{c}} = \mathrm{HASH}_3((-1))$.
    \item After the first sub-iteration, we have
    \[
        \Bar{h}^0_2 := \Bar{h}^{0,v_{j,k}} = \mathrm{HASH}(h^0_2, 2h^0_1),
    \]
    and 
    \[
        \Bar{h}^0_3 := \Bar{h}^{0,v_{j,k}} = \mathrm{HASH}(h^0_3, 2h^0_1),
    \]
    which remains equal for all $v \in V_2(G)$ and $\Bar{v} \in V_2(\Bar{G})$.
    \item After the second sub-iteration, we have
    \[
        h^1_4 := h^{1,c} = h^{1,\Bar{c}} = \mathrm{HASH}(h^0_4, 0, 1 \cdot \Bar{h}^0_2),
    \]
    which remains equal for both graphs.
    \item After the third sub-iteration, we have
    \[
        h^1_2 := h^{1,v_{j,j}} = \mathrm{HASH}(\Bar{h}^0_2, 1 \cdot h^1_3),
    \]
    and
    \[
        h^1_3 := h^{1,v_{j,k}} = \mathrm{HASH}(\Bar{h}^0_3, 0),
    \]
    which remains equal for both graphs.
    \item After the final sub-iteration, we have
    \[
        h^1_1 := h^{1,u} = h^{1,\Bar{u}} = \mathrm{HASH}(h^0_1, 0, 2\cdot h^1_2+1\cdot h^1_3),
    \]
    which remains equal for both graphs.
    \item The Tripartite WL-test terminates after one iteration since no further node pairs are separated.
\end{itemize}

The Tripartite WL-test returns $C^{0,\cdot}$, which is the same for both instances. Thus, we conclude that the two graphs are not separated, with variables and constraints correspondingly indexed. By Proposition~\ref{prop:message_passing_GNN_has_equal_separation_power}, we conclude that, in both the node-level and graph-level cases, tripartite MP-GNNs cannot separate the two instances.

Therefore, we conclude that tripartite MP-GNNs cannot approximate the optimal solution or optimal value for non-convex QCQP instances (even QP instances). To demonstrate that tripartite MP-GNNs cannot accurately predict feasibility, we slightly modify the two instances:

\begin{proof}[Proof of Proposition \ref{prop:can_not_represent_non_convex}]
    We reconstruct the objective as a constraint. Specifically, consider the following two instances:
    \begin{equation}\label{eq:non_convex_instance_1f}
        \begin{aligned}
            & \underset{x \in \mathbb{R}^6}{\min}  & & 0 \\
            & \text{s.t.} & & x_1 x_2 + x_2 x_3 + x_3 x_1 + x_4 x_5 + x_5 x_6 + x_6 x_4 \leq -\frac{3}{4} \\
            & & & \sum_{i=1}^{6} x_i^2 \le 1 \\
        \end{aligned}
    \end{equation}
    and
    \begin{equation}\label{eq:non_convex_instance_2f}
        \begin{aligned}
            & \underset{x \in \mathbb{R}^6}{\min}  & & 0 \\
            & \text{s.t.} & & x_1 x_2 + x_2 x_3 + x_3 x_4 + x_4 x_5 + x_5 x_6 + x_6 x_1 \leq -\frac{3}{4} \\
            & & & \sum_{i=1}^{6} x_i^2 \le 1 \\
        \end{aligned}
    \end{equation}
    Clearly, instance \ref{eq:non_convex_instance_1f} is not feasible, while instance \ref{eq:non_convex_instance_2f} is feasible.

    In the graph generated by the Tripartite graph representation, we change the objective to another special constraint and add a new dummy objective. Similarly, we see that tripartite MP-GNNs fail to separate $\mathcal{I}$ and $\Bar{\mathcal{I}}$.
\end{proof}

\section{Additional experiments}
\label{appendix:add_res}

{

\paragraph{Evaluation on the QPLIB dataset.}

We incorporated a dataset derived from real-world instances in QPLib. Due to computational constraints, we used all instances in QPLIB that are convex and have no more than 5,000 nonzeros as training sets, and augmented these instances by perturbations. Below, we report the training and validation losses in Table \ref{table:train_loss_qplib} and Table \ref{table:valid_loss_qplib}, respectively.

\begin{table}[h]
\centering
\caption{Training loss vs. numbers of parameters on QPLIB.}

\begin{tabular}{lccccccc}
\toprule
\multirow{2}{*}{Target} & \multicolumn{5}{c}{\# Parameters}          \\ \cmidrule{2-6} 
                             & 15K    & 42K    & 126K    & 450K   & 1.7M       \\
\midrule
objective                  & 4.3306  & 0.9986  & 0.8679   & 0.7999  & 0.6614  \\
solution                   & 19.2789 & 18.9083 & 18.8303  & 18.7472 & 18.6495\\
\bottomrule
\end{tabular}
\label{table:train_loss_qplib}
\end{table}

\begin{table}[h]
\centering
\caption{Validation loss vs. numbers of training samples on QPLIB.}

\begin{tabular}{lccccc}
\toprule
\multirow{2}{*}{Target} &        \multicolumn{5}{c}{\# Samples} \\
\cmidrule{2-6}  
                           & 100     & 300     & 500       & 700     & 1000      \\
                           \midrule
objective                  & 1.5414  & 0.8922  & 0.8564    & 0.6176  & 0.5598    \\
solution                   & 19.9106 & 19.7552 & 19.6981   & 19.6788 & 19.6165  \\
\bottomrule
\end{tabular}
\label{table:valid_loss_qplib}
\end{table}

As shown, both training and validation losses improve consistently with increased model capacity and training samples. Such a trend supports the method’s effectiveness on real-world QCQP instances.

}

{

\paragraph{Evaluation on boundedness.}

We further evaluated performance on the task of predicting the boundedness of convex QCQPs. Specifically, unbounded cases arise when there exists a direction $d$ such that $Qd=0$ and $p^\top d<0$ for both the objective and constraints. To examine performance under these challenging edge cases, we generated a dedicated dataset by randomly enforcing such conditions. Using the same training setup as in Section~\ref{sec:main-results}, the corresponding training and validation losses are reported in Tables~\ref{table:train-boundedness} and~\ref{table:valid-boundedness}.

\begin{table}[h]
\centering

\caption{Training loss vs. number of parameters on predicting boundedness.}
\begin{tabular}{lccccc}
\toprule
{\#Params} & 21k    & 42k    & 126k   & 1.7M   & 6.7M   \\
\midrule
Train loss                  & 0.6933 & 0.2504 & 0.1241 & 0.0470 & 0.0427 \\
\bottomrule
\end{tabular}
\label{table:train-boundedness}
\end{table}

\begin{table}[h]
\centering

\caption{Validation loss vs. number of training samples on predicting boundedness.}
\begin{tabular}{lccccc}
\toprule
\multicolumn{1}{c}{\#Samples} & 100    & 300    & 500    & 700    & 1000   \\
\midrule
Valid loss                   & 0.6876 & 0.3495 & 0.2786 & 0.2360 & 0.1285\\
\bottomrule
\end{tabular}
\label{table:valid-boundedness}
\end{table}

The results show a clear trend: training loss decreases with larger model sizes, and validation loss improves with more training data. These results suggest that our approach is capable of learning effectively on unbounded problem instances.

}

\end{document}